\numberwithin{equation}{section}
\newtheorem{maintheorem}{Theorem}
\newtheorem{mainprop}[maintheorem]{Proposition}
\newtheorem{theorem}{Theorem}[section]
\newtheorem*{theorem*}{Theorem}
\newtheorem{lemma}[theorem]{Lemma}
\newtheorem{claim}[theorem]{Claim}
\theoremstyle{definition}{

\newtheorem*{definition*}{Definition}

\newtheorem*{example*}{Example}

\newtheorem*{remark*}{Remark}

}
\newcommand\ceil[1]{\lceil#1\rceil}
\newcommand{\R}{\mathbb R}
\newcommand{\Z}{\mathbb Z}
\newcommand{\E}{\mathbb{E}}
\renewcommand{\P}{\mathbb{P}}
\DeclareMathOperator{\var}{Var}
\DeclareMathOperator{\Cov}{Cov}
\DeclareMathOperator{\diam}{diam}
\newcommand{\tmix}{t_\textsc{mix}}
\newcommand{\tv}{{\textsc{tv}}}
\newcommand{\one}{\mathbbm{1}}
\renewcommand{\epsilon}{\varepsilon}
\renewcommand{\phi}{\varphi}
\newcommand{\cG}{\mathcal{G}}
\newcommand{\cA}{\mathcal{A}}
\newcommand{\cB}{\mathcal{B}}
\newcommand{\cD}{\mathcal{D}}
\newcommand{\cE}{\mathcal{E}}
\newcommand{\cI}{\mathcal{I}}
\newcommand{\cJ}{\mathcal{J}}
\newcommand{\cK}{\mathcal{K}}
\newcommand{\ltwo}{{\mathfrak{M}}}
\newcommand{\sH}{\mathscr{H}}
\newcommand{\fsup}{\mathscr{F}_\textsc{s}}
\newcommand{\tpluss}{t_\star}
\newcommand{\tsups}{t_{-}}
\newcommand{\alt}{\mathrm{alt}}
\newcommand{\blt}{\mathrm{blt}}
\date{}
\begin{document}
\title{Fast Initial Conditions for Glauber Dynamics }

\author{Eyal Lubetzky}
\address{Eyal Lubetzky\hfill\break
Courant Institute of Mathematical Sciences
\\ New York University\\
New York, NY 10012, USA.}
\email{eyal@courant.nyu.edu}
\urladdr{}

\author{Allan Sly}
\address{Allan Sly\hfill\break
Department of Mathematics\\
Princeton University\\
Princeton, NJ 08544, USA, and\hfill\break
Department of Statistics\\
UC Berkeley\\
Berkeley, CA 94720, USA.
}
\email{asly@math.princeton.edu}
\urladdr{}

\begin{abstract}
In the study of Markov chain mixing times, analysis has centered on the performance from a worst-case starting state.  Here, in the context of Glauber dynamics for the one-dimensional Ising model, we show how new ideas from information percolation can be used to establish mixing times from other starting states.  At high temperatures we show that the alternating initial condition is asymptotically the fastest one, and, surprisingly, its mixing time is faster than at infinite temperature, accelerating as the inverse-temperature $\beta$ ranges from 0 to $\beta_0=\frac12\mathrm{arctanh}(\frac13)$. Moreover, the dominant test function depends on the temperature:  at $\beta<\beta_0$ it is autocorrelation, whereas at $\beta>\beta_0$ it is the Hamiltonian.
\end{abstract}
\maketitle
\vspace{-0.75cm}

\section{Introduction}

In the study of mixing time of Markov chains, most of the focus has been on determining the asymptotics of the worst-case mixing time, while relatively little is known about the relative effect of different initial conditions.  The latter is quite natural from an algorithmic perspective on sampling, since one would ideally initiate the dynamics from the fastest initial condition. However, until recently, the tools available for analyzing Markov chains on complex systems, such as the Ising model, were insufficient for the purpose of comparing the effect of different starting states; indeed, already pinpointing the asymptotics of the worst-case state for Glauber dynamics for the Ising model can be highly nontrivial.

In this paper we compare different initial conditions for the Ising model on the cycle.  In earlier work~\cite{LS4}, we analyzed three different initial conditions. The all-plus state is provably the worst initial condition up to an additive constant.  Another is a quenched random condition chosen from $\nu$, the uniform distribution on configurations, which with high probability has a mixing time which is asymptotically as slow.  A third initial condition is an annealed random condition chosen from $\nu$, i.e., to start at time 0 from the uniform distribution, which is asymptotically twice as fast as all-plus.

Here we consider two natural deterministic initial configurations.  The first is the alternating sequence
\[
x^{\alt}(i) = \begin{cases}
1 &i\equiv 0  \!\!\pmod 2 \\
-1 &i\equiv 1   \!\!\pmod 2  \, ,
\end{cases}
\]
which we will show is asymptotically the \emph{fastest} deterministic initial condition---yet strictly slower than starting from the annealed random condition---for all $\beta < \beta_0 := \frac12\operatorname{arctanh}(\frac13)$ (at  $\beta=\beta_0$ they match).
The second is the bi-alternating sequence
\[
x^{\blt}(i) = \begin{cases}
1 &i\equiv 0,3  \!\!\pmod 4 \\
-1 &i\equiv 1,2  \!\! \pmod 4  \, .
\end{cases}
\]
For convenience we will assume that $n$ is a multiple of 4, which ensures that the configurations are semi-translation invariant and turns both sequences into  eigenvectors of the transition matrix of simple random walk on the cycle.  (This is not necessary for the main result but leads to cleaner analysis.)

In what follows, set $\theta=\theta_{\beta}=1 - \tanh(2\beta)$, and let $\tmix^{x_0}(\epsilon)$ denote the time it takes the dynamics to reach total variation distance at most $\epsilon$ from stationarity, starting from the initial condition $x_0$.

\begin{figure}
\vspace{-0.175in}
  \hspace{-0.15in}
  \begin{tikzpicture}
    \node (plot) at (0,0)
    {\includegraphics[width=0.7\textwidth]{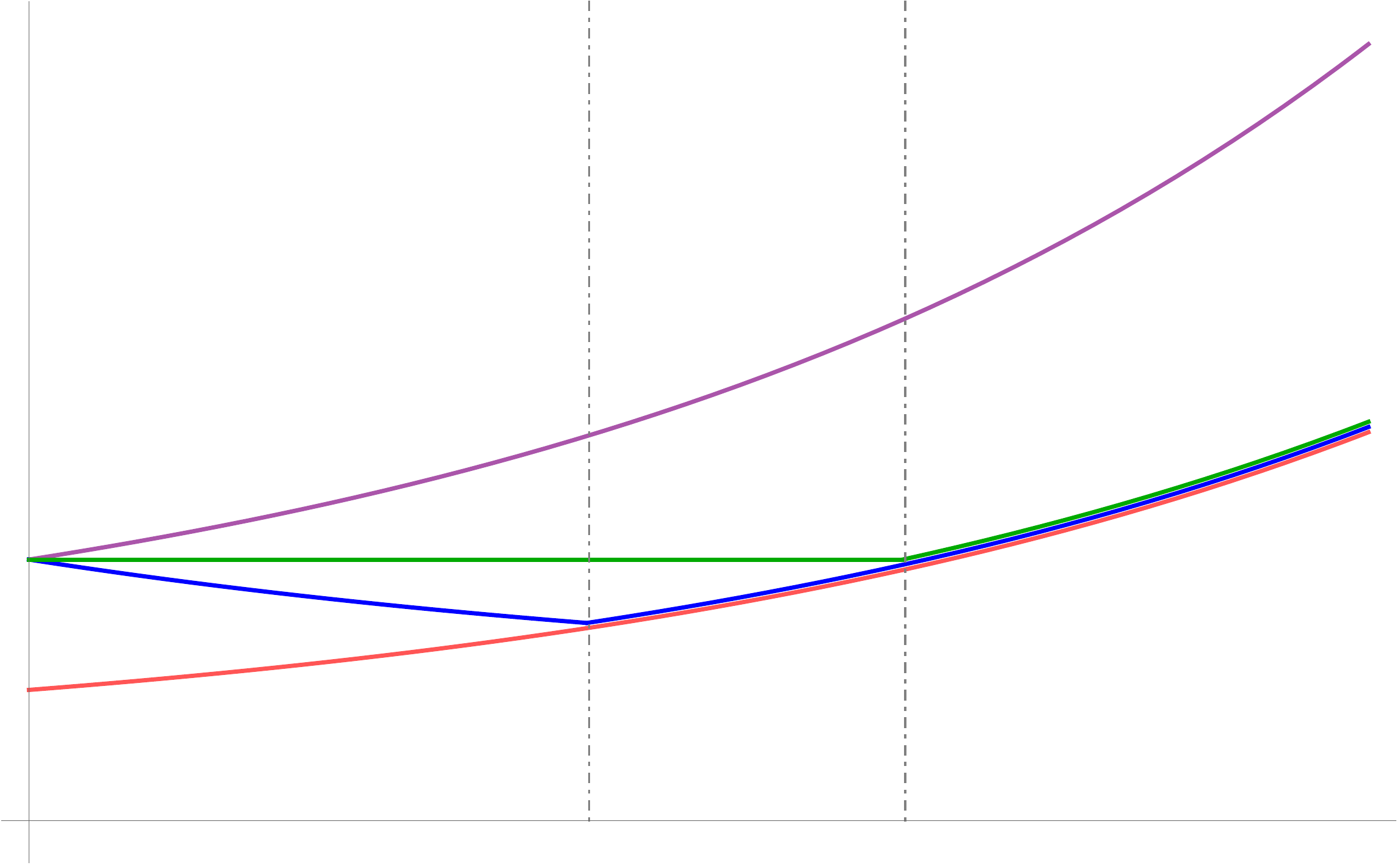}};

    \begin{scope}[shift={(plot.south west)},x={(plot.south
        east)},y={(plot.north west)}, font=\small]

      \node[font=\tiny] at (-0.01,0.215) {$\frac14\log n$};
      \node[font=\tiny] at (-0.01,0.36) {$\frac12\log n$};
      \node[font=\large] at (0,0.94) {$\tmix$};

      \node[font=\tiny] at (-0.01,0.286) {$\frac38\log n$};
      \draw[color=gray,dashdotted] (0.032,0.285) -- (0.41,0.285);

      \node[font=\tiny] at (0.426,0.04) {$\frac12\operatorname{arctanh}(\frac13)$};
      \node[font=\tiny] at (0.648,0.04) {$\frac12\operatorname{arctanh}(\frac12)$};
      \node[font=\large] at (.96,0.03) {$\beta$};

      \node[color={rgb:red,0.5;green,0;blue,0.5}] at (0.765,0.86) {{\it all-plus / quenched}};
      \node[color={rgb:red,0.16;green,0.66;blue,0.16}] at (0.75,0.47) {{\it bi-alternating}};
      \node[color=red] at (0.53,0.26) {{\it annealed}};
      \node[color=blue] at (0.115,0.31) {{\it alternating}};

    \end{scope}
  \end{tikzpicture}
  \vspace{-0.225in}
  \caption{Asymptotic mixing time from the alternating and bi-alternating initial conditions as per Theorem~\ref{mainthm-trans}, compared to the  known behavior of worst-case (all-plus) and random initial conditions.}
  \vspace{-0.075in}
  \label{fig:initial}
\end{figure}

\begin{maintheorem}\label{mainthm-trans}
For every $\beta>0$ and $0<\epsilon<1$ there exist $C(\beta)$ and $N(\beta,\epsilon)$ such that the following hold for Glauber dynamics for the Ising model on the cycle $\Z/n\Z$ at inverse-temperature $\beta$ for all $n>N$.
\begin{enumerate}[(i)]
\item Alternating initial condition:
\[
\left|\tmix^{x^{\alt}}(\epsilon) - \max\big\{\tfrac{1}{4-2\theta},\tfrac{1}{4\theta}\big\}\log n \right| \leq C \log \log n\,.
\]
\item Bi-alternating initial condition:
\[
\left|\tmix^{x^{\blt}}(\epsilon) - \max\left\{\tfrac{1}{2},\tfrac{1}{4\theta}\right\}\log n \right| \leq C \log \log n\,.
\]
\end{enumerate}
\end{maintheorem}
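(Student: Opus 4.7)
My plan is to prove matching lower and upper bounds: lower bounds from two explicit test functions per initial condition, and the upper bound by extending the information-percolation machinery of \cite{LS4} to deterministic starting states.

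\textbf{Lower bounds.} For $\sigma\in\{\pm1\}^n$ one has $\tanh(\beta(\sigma_{i-1}+\sigma_{i+1}))=\tfrac12\tanh(2\beta)(\sigma_{i-1}+\sigma_{i+1})$, so the conditional mean of an updated spin is linear in its neighbors; consequently, for any eigenvector $F=\sum_i a_i\sigma_i$ of the cycle Laplacian $\tfrac12(a_{i-1}+a_{i+1})=\mu\,a_i$, the expectation evolves as $\tfrac{d}{dt}\E[F(X_t)]=(\tanh(2\beta)\mu-1)\E[F(X_t)]$. Applying this to $F_1:=\sum_i(-1)^i\sigma_i$ ($\mu=-1$) starting from $x^{\alt}$ gives $\E[F_1(X_t)]=ne^{-(2-\theta)t}$, and to $G_1:=\sum_i x^{\blt}(i)\sigma_i$ ($\mu=0$) starting from $x^{\blt}$ gives $\E[G_1(X_t)]=ne^{-t}$. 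The Hamiltonian $F_2:=\sum_i\sigma_i\sigma_{i+1}$ requires an extra step: under translation invariance the pair correlations $c_k(t)$ satisfy a discrete heat equation on $k\geq1$ with boundary $c_0\equiv 1$, and the slowest mode of $c_k-c_k^\pi$ decays at rate $2\theta$, giving $|\E F_2(X_t)-\E_\pi F_2|\asymp ne^{-2\theta t}$ from either $x^{\alt}$ (where $F_2=-n$) or $x^{\blt}$ (where $F_2=0$ vs.\ $\E_\pi F_2\asymp n\tanh\beta$). Since $\var_\pi F_1,\var_\pi G_1,\var_\pi F_2=O(n)$ by exponential decay of 1D Ising correlations (transfer matrix), a Chebyshev distinguishing argument yields the claimed lower bounds up to $O(\log\log n)$.

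\textbf{Upper bounds.} I would use the grand coupling from \cite{LS4}. Decompose each update at site $i$ as \emph{oblivious} (probability $\theta$: resample an independent fair coin) or \emph{copy} (probability $1-\theta$: adopt the spin of a uniformly chosen neighbor); these probabilities reproduce Glauber exactly. Tracing site-time $(i,t)$ backward yields a continuous-time simple random walk $H_i$ on $\Z/n\Z$ of jump rate $1-\theta$ that is killed at rate $\theta$; for multiple sites the histories form coalescing random walks with killing. Couple $X_t^{x_0}$ with $X_t^\pi$ via these shared updates, and bound TV by $L^2$: $\|P_t^{x_0}-\pi\|_\tv^2\leq\tfrac14\chi^2(P_t^{x_0},\pi)$. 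Expand $\chi^2$ over subsets $A\subseteq\Z/n\Z$ of contributions from the joint history of $\{H_i:i\in A\}$. Singletons contribute the autocorrelation-type mass $\sum_i(\E[x_0(H_i(0))\,\one\{\text{survives}\}])^2$; the eigenvector property of $x^{\alt}$ (resp.\ $x^{\blt}$) makes this $ne^{-2(2-\theta)t}$ (resp.\ $ne^{-2t}$). Pair terms $|A|=2$ produce the energy-type contribution: two nearby histories meet with probability $\sim 1/\sqrt{t}$, then are jointly killed at rate $\theta$, so summing over separations $k=O(\sqrt{t})$ yields $ne^{-4\theta t}$.

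\textbf{Main obstacle.} Handling subsets $|A|\geq3$ is the crux. In the annealed IC setting of \cite{LS4}, the random initial condition supplies an automatic orthogonality that kills all but small clusters. Here, with a deterministic IC, we must exploit the precise character structure of the initial data: $\prod_{i\in A}x^{\alt}(H_i(0))=(-1)^{\sum_i H_i(0)}$ is a joint $\pm1$ character of the random endpoints, with an analogous $4$th-root-of-unity character for $x^{\blt}$. Integrating these against the joint survival/coalescence law of $|A|$ coalescing walks should produce oscillatory cancellations that render the $|A|\geq3$ aggregate $o(1)$ relative to the singleton and pair terms. The outcome is $\chi^2(P_t^{x_0},\pi)\leq ne^{-2(2-\theta)t}+ne^{-4\theta t}+o(1)$ for $x^{\alt}$ and $ne^{-2t}+ne^{-4\theta t}+o(1)$ for $x^{\blt}$; taking $t$ equal to the respective threshold plus $C\log\log n$ forces $\chi^2<4\epsilon^2$, matching the lower bound.
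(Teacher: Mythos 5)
Your lower-bound computations for the autocorrelation-type functions $F_1$ and $G_1$ are in the spirit of the paper (which packages them as $R_{x_0,t}(X)=\sum_i X(i)\E_{x_0}[X_t(i)]$ with $\E_{x_0}[X_t(i)]=e^{-\theta t}(P_{(1-\theta)t}x_0)(i)$), but two points in your lower bound are genuinely incomplete. First, Chebyshev requires a variance bound for $F(X_t)$ under $\P_{x_0}$ at time $t$, not only under $\pi$; the paper gets $\var_{x_0}=O(n\log^2 n)$ from a locality-of-histories decorrelation lemma (Lemma~\ref{l:decorrelation}), and you should not simply quote the transfer-matrix bound for $\pi$. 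Second, and more seriously, your Hamiltonian argument for $x^{\blt}$ does not work with the full energy $F_2=\sum_i\sigma_i\sigma_{i+1}$: under $x^{\blt}$ the nearest-neighbour products alternate between $-1$ and $+1$, and on the key event that both histories survive to time $0$ without merging the edges with initial product $+1$ contribute with the \emph{same} sign as the stationary measure, so the claimed gap $|\E F_2(X_t)-\E_\pi F_2|\asymp n e^{-2\theta t}$ may be destroyed by cancellation. This is exactly why the paper uses the thinned test function $R(X)=\sum_{i=1}^{n/4}X(4i)X(4i+1)$, so that for both $x^{\alt}$ and $x^{\blt}$ every summand starts from product $-1$; the lower bound on the gap is then proved via positive association (the stationary contribution on the non-merging event is at least $\tanh(\theta)$ times the probability that both histories stay put, bounded below by $c_1 t^{-2}e^{-2\theta t}$ from~\cite{LS4}), and a sign argument for the deterministic start. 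Asserting ``$\asymp$'' without this one-sided comparison is a gap, since the lower bound on the difference is the whole content.

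The larger gap is in your upper bound. You reduce everything to a global $\chi^2$ expansion over subsets $A$ of sites and concede that the $|A|\ge 3$ terms are the crux, to be handled by hoped-for ``oscillatory cancellations'' of the characters $\prod_{i\in A}x_0(H_i(0))$ against the law of coalescing killed walks. That step is not justified, and it is precisely where the difficulty lies: the coalescence structure destroys any clean orthogonality, and controlling the aggregate of large clusters is what the information-percolation machinery is designed to avoid. The paper's route is different: it runs the histories only back to $\tsups=\tpluss-\kappa\log\log n$, shows (Lemma~\ref{l:spread} plus a block argument) that with probability $1-O(n^{-9})$ the surviving support splits into well-separated clusters $V_i$ of polylogarithmic size, couples $X_{\tsups}$ restricted to $\bigcup_i V_i$ with a product of independent copies, reduces each $V_i$ to a set $U_i$ of size at most $L=10$ by waiting a further random time $S_i$, and only then applies the $L^1$-to-$L^2$ reduction of~\cite{LS3} \emph{blockwise}. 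Each block is estimated by letting its $\le L$ histories coalesce to a single point (rate $2\theta$ for two surviving walks) and then invoking the single-site computation $\tfrac12 e^{-(2-\theta)t}$ (resp.\ $\tfrac12 e^{-t}$ for $x^{\blt}$), which is where the rate $\min\{2\theta,2-\theta\}$ (resp.\ $\min\{2\theta,1\}$) and hence the threshold $\max\{\tfrac1{4-2\theta},\tfrac1{4\theta}\}\log n$ emerges, with only $\log\log n$ losses. Your singleton and pair heuristics correctly predict these exponents, but without an argument replacing the $|A|\ge 3$ analysis (or the paper's localization scheme), the upper bound is not proved.
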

Surprisingly, the mixing time for the alternating initial condition begins as actually \emph{faster} than the infinite temperature model: it \emph{decreases} as a function of $\beta$ before increasing when $\beta > \frac12 \operatorname{arctanh}(\frac13)$.

The following theorem summarizes the bounds we proved in~\cite{LS1,LS4} for the all-plus and random initial conditions.
See Figure~\ref{fig:initial} for the relative performance of all these different initial conditions.

\begin{maintheorem}[\cite{LS1,LS4}]\label{mainthm-previous-bounds} In the same setting of Theorem~\ref{mainthm-trans}, the following hold.
\begin{enumerate}[(i)]
\item All-plus initial condition $x^+ \equiv 1$:
\[
\left|\tmix^{x^+}(\epsilon) - \tfrac{1}{2\theta}\log n \right| \leq C \log \log n \, .
\]
\item Quenched random initial condition:
\[
\nu\left(\left\{ x_0: \left|\tmix^{x_0}(\epsilon) - \tfrac{1}{2\theta}\log n \right| \leq C \log \log n \right\}\right) \to 1  \quad\mbox{as $n\to\infty$}\,.
\]
\item Annealed random initial condition:
\[
\left|\tmix^{\nu}(\epsilon) - \tfrac{1}{4\theta}\log n \right| \leq C \log \log n \, .
\]
\end{enumerate}
\end{maintheorem}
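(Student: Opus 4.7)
The plan is to reprove all three bounds via the \emph{information percolation} framework developed in the authors' earlier work~\cite{LS1,LS4}, exploiting the particularly clean one-dimensional structure. First, realize the Glauber dynamics on $\Z/n\Z$ through a standard graphical construction: each vertex rings at Poisson rate $1$ with i.i.d.\ uniform marks, and by the $\theta$-decomposition of the 1D heat-bath each update is \emph{oblivious} with probability $\theta=1-\tanh(2\beta)$ (a fresh $\pm 1$ coin, independent of the neighbors) and otherwise \emph{constrained} (a deterministic function of the two neighboring spins). Tracing back from $(v,t)$ along constrained updates and terminating at oblivious ones defines the \emph{red cluster} $R_v(t)$, a random interval in space-time; the spin $\sigma_v(t)$ depends only on (a) the oblivious coins inside $R_v(t)$ and (b) the initial spins on the time-$0$ footprint of $R_v(t)$. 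The key estimate, proved by analyzing the endpoints of $R_v(t)$ as lazy random walks in a Poissonian killing field of rate $\theta$, is that the single-vertex survival probability satisfies $q_t=\P(R_v(t)\text{ reaches time }0)=n^{o(1)}e^{-\theta t}$.

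Upper bounds then follow by coupling the chain from $x_0$ with an independent copy started from $\pi$, using shared update randomness. The two chains agree at $v$ unless $R_v(t)$ is red and the two initial configurations differ on its time-$0$ footprint. For the all-plus start (i) and for a typical quenched $x_0\sim\nu$ (ii), a second-moment computation on the resulting disagreement set---using that in 1D disjoint clusters are essentially independent---gives $\|\mu_t^{x_0}-\pi\|_\tv=o(1)$ once $t\geq\frac{1}{2\theta}\log n+C\log\log n$. For the annealed start (iii) the linear signal averages to zero over $x_0$, and one works directly with the chi-square divergence $\chi^2(\mu_t^\nu\|\pi)$: the dominant contribution now comes from \emph{pairs} of clusters whose time-$0$ footprints overlap, paying $q_t^2$ rather than $q_t$, which halves the exponent and yields $t=\frac{1}{4\theta}\log n+C\log\log n$.

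Lower bounds are obtained from matching test functions. For (i), use $f(x)=\sum_v x(v)$: starting from $x^+$ one gets $\E f(X_t)=n q_t(1+o(1))$, while $\var_\pi f=\Theta(n)$ by exponential decay of correlations on the 1D cycle, so mixing cannot occur before $t=\frac{1}{2\theta}\log n-C\log\log n$. For (ii), replace $f$ by $\sum_v x_0(v)\,x(v)$ and take a union/concentration bound over $x_0\sim\nu$. For (iii), the linear test averages to zero, so switch to the Hamiltonian $\sum_v x_v x_{v+1}$ (or equivalently the squared magnetization), whose dominant discrepancy from stationarity at time $t$ is built from two-cluster survivals and matches the $\frac{1}{4\theta}\log n$ upper bound.

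The main technical obstacle is pinning down the exponential rate $q_t=n^{o(1)}e^{-\theta t}$, together with its analogue $n^{o(1)}e^{-2\theta t}$ for two-cluster survival, with enough precision to absorb all polynomial pre-factors into the $C\log\log n$ error; in 1D this reduces to a lazy-random-walk computation in a Poissonian killing field, manageable but delicate. A secondary challenge is the quenched statement (ii), which requires concentration of the disagreement-set second moment across $x_0\sim\nu$; this is handled by a variance estimate on the random set of vertices whose red clusters have time-$0$ footprints lying in fixed small intervals.
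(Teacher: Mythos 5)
The statement you are proving is quoted in the paper from~\cite{LS1,LS4}, and your lower-bound tests (magnetization for all-plus, autocorrelation for a quenched uniform start, pair-correlation/Hamiltonian for the annealed start) are indeed the ones used there, as is the chi-square mechanism you describe for part~(iii). The genuine gap is in your upper-bound argument for parts~(i) and~(ii). You propose to couple the chain started from $x^+$ (or from a typical quenched $x_0$) with a stationary chain using shared update randomness, and to bound the total-variation distance by a first/second-moment analysis of the disagreement set. Any such coupling argument bounds $\|\P_{x_0}(X_t\in\cdot)-\pi\|_\tv$ by the probability that the disagreement set is nonempty, and at $t=\tfrac{1}{2\theta}\log n$ that probability is $1-o(1)$: each single-site history survives to time $0$ with probability exactly $e^{-\theta t}$ (with the neighbor-copying encoding there is no $n^{o(1)}$ correction and no delicate killing-field computation), so about $\sqrt n$ essentially independent histories survive, and at each surviving site the all-plus chain shows $+1$ while the stationary partner shows its own initial spin at the walk's endpoint, hence disagrees with probability about $\tfrac12$. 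The route you describe therefore only recovers the classical $\tfrac{1}{\theta}\log n$ bound; the factor-$2$ improvement is precisely the hard content of~\cite{LS1,LS4}.

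What is missing is the cancellation step: at time $\tfrac{1}{2\theta}\log n+C\log\log n$ each site carries only a small bias $\tfrac12 e^{-\theta t}\ll n^{-1/2}$, and one must convert ``many weakly biased, nearly independent sites'' into a small total-variation distance. This requires an $L^1$-to-$L^2$ (Cauchy--Schwarz) reduction after decomposing the surviving histories into well-separated clusters of polylogarithmic size and coupling with a product measure---exactly the scheme carried out in Section~3 of the present paper (the events $\cA,\cB$, the sets $V_i,U_i$, and Lemma~\ref{l:updateSuppInq} together with the $L^2$ bound), or equivalently the red/green/blue conditional analysis of~\cite{LS4}; the resulting bound is of order $\sqrt{n}\,e^{-\theta t}$, which is what produces the threshold $\tfrac{1}{2\theta}\log n$. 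In other words, the chi-square-type computation you reserve for the annealed case~(iii) is equally indispensable for~(i) and~(ii); for~(ii) note also that the upper bound is automatic from the worst-case bound in~(i), so the quenched statement reduces to the lower bound, where your autocorrelation test (with a variance/decorrelation estimate as in Lemma~\ref{l:decorrelation}) is the correct tool. Finally, the genuinely delicate probabilistic input is not the single-history rate $e^{-\theta t}$ but the two-history estimate $\P(\text{both survive to time } 0 \text{ without coalescing})\geq c\,t^{-2}e^{-2\theta t}$, which underlies the $\tfrac{1}{4\theta}\log n$ lower bound in~(iii).
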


(Note that, in the case of the all-plus initial conditions,  the mixing time $\tmix^{x^+}(\epsilon)$  is known in higher precision: it was shown~\cite{LS1,LS4} to be  within an additive constant (depending on $\epsilon$ and $\beta$) of $\frac{1}{2\theta}\log n$.)

The upper bounds on the mixing times in Theorem~\ref{mainthm-trans} rely on the information percolation framework  introduced by the authors in~\cite{LS4}. The asymptotically matching lower bounds in that theorem are derived from two test functions: the autocorrelation function, which for instance matches our upper bound on the alternating initial condition for $\beta>\beta_0$; and the Hamiltonian test function, which gives rise to the following lower bound on \emph{every deterministic initial condition}.

\begin{mainprop}\label{p:autoLowerB}
Let $X_t$ be Glauber dynamics for the Ising model on  $\Z/n\Z$ at inverse-temperature $\beta$.
For every sequence of deterministic initial conditions $x_0$, the dynamics  at time
 \[ \tpluss =  \tfrac{1}{4-2\theta}\log n - 8\log\log n\]
is at total variation distance $1-o(1)$ from equilibrium; that is,
\[
\lim_{n\to\infty} \inf_{x_0} \left\|\P_{x_0}\left(X_{\tpluss} \in \cdot\right) - \pi \right\|_\tv = 1\,.
\]
\end{mainprop}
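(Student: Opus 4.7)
The plan is to apply a second-moment (Chebyshev) argument to the linear test function $T(x) = \langle x_0, x\rangle = \sum_{i} x_0(i)\, x_i$, the autocorrelation of the current state with the initial condition. Under $\pi$ we have $\E_\pi[T] = 0$ and, by the exponential decay of correlations in the 1D Ising model at finite $\beta$, $\var_\pi[T] = \sum_{i,j} x_0(i)\,x_0(j)\,\tanh(\beta)^{|i-j|} = O(n)$ uniformly in $x_0$. The goal is to show that at time $\tpluss$ the expected value $\E_{x_0}[T(X_{\tpluss})]$ exceeds $\sqrt n$ by a polylogarithmic factor; combined with a matching dynamical variance bound, this yields TV distance $1-o(1)$ by Chebyshev.

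The key input is that the first moment under 1D Ising Glauber evolves \emph{exactly} linearly. Since $X_{i\pm 1} \in \{\pm 1\}$, the heat-bath update coincides with the linear rule
\[
\E\bigl[X_i^{\mathrm{new}} \mid X_{i-1}, X_{i+1}\bigr] \;=\; \tanh\bigl(\beta(X_{i-1}+X_{i+1})\bigr) \;=\; \tfrac{1-\theta}{2}(X_{i-1}+X_{i+1}),
\]
so $m_i(t) := \E_{x_0}[X_i(t)]$ satisfies the closed linear ODE $\dot m_i = -m_i + \tfrac{1-\theta}{2}(m_{i-1}+m_{i+1})$. Hence $m(t) = K_t x_0$ with Fourier symbol $\hat K_t(k) = e^{-\gamma_k t}$ and $\gamma_k = 1 - (1-\theta)\cos(2\pi k/n) \in [\theta,\, 2-\theta]$. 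Using Parseval $\sum_k |\hat x_0(k)|^2 = n\|x_0\|_2^2 = n^2$ together with the uniform inequality $\gamma_k \le 2-\theta$, we obtain the $x_0$-uniform signal bound
\[
\E_{x_0}[T(X_t)] \;=\; \tfrac{1}{n}\sum_k |\hat x_0(k)|^2 e^{-\gamma_k t} \;\ge\; n\, e^{-(2-\theta)t},
\]
which at $t = \tpluss$ becomes $\E_{x_0}[T(X_{\tpluss})] \ge \sqrt n\cdot (\log n)^{8(2-\theta)}$, exceeding $\sqrt{\var_\pi[T]}$ by the desired polylogarithmic factor.

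It remains to control the dynamical variance $\var_{x_0}[T(X_{\tpluss})] = \langle x_0, \widetilde C(\tpluss)\, x_0\rangle - \langle x_0, K_{\tpluss} x_0\rangle^2$, where $\widetilde C_{ij}(t) := \E_{x_0}[X_i(t) X_j(t)]$. The main obstacle is establishing $\|\widetilde C(t)\|_{\mathrm{op}} = O(1)$ uniformly in $x_0$ and in $t \le \tpluss$, which then yields $\var_{x_0}[T(X_{\tpluss})] = O(n)$. Unlike the first moment, the two-point function does \emph{not} satisfy a closed linear equation (the diagonal condition $\widetilde C_{ii}\equiv 1$ acts as a source term), so the uniform bound requires the information percolation framework of~\cite{LS4}: in the graphical representation each update is either a ``refresh'' (probability $\theta$, setting $X_i$ to a uniform $\pm1$) or a nearest-neighbor ``copy'' (probability $1-\theta$), and one bounds $|\widetilde C_{ij}(t) - \tanh(\beta)^{|i-j|}|$ by the probability that the backward traces from $i$ and $j$ coalesce before either is refreshed---a quantity which decays exponentially in $|i-j|$ for $t = O(\log n)$. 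With this covariance estimate in hand, Chebyshev applied to the event $\{T(X_{\tpluss}) \ge \tfrac{1}{2}\E_{x_0}[T(X_{\tpluss})]\}$ gives $\|\P_{x_0}(X_{\tpluss} \in \cdot) - \pi\|_\tv \ge 1 - O((\log n)^{-16(2-\theta)}) = 1-o(1)$.
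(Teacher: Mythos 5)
Your proposal is correct in substance, and it is a variant of the paper's argument rather than a copy of it. The paper also runs a Chebyshev argument on a linear statistic, but with the weighted autocorrelation $R_{x_0,t}(X)=\sum_i X(i)\,\E_{x_0}[X_t(i)]$, i.e.\ weight vector $K_t x_0$ in your notation: its signal is $\|K_t x_0\|_2^2=e^{-2\theta t}\|P_{(1-\theta)t}x_0\|_2^2\geq n\,e^{-(4-2\theta)t}$ (all eigenvalues of the walk generator are at most $2$), and---crucially---its variance is bounded by $O(\log^2 n)$ \emph{times the signal} via the decorrelation Lemma~\ref{l:decorrelation}, so the Chebyshev ratio is $\log^2 n/\mathrm{signal}$. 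Your unweighted $T(X)=\langle x_0,X\rangle$ instead has signal $\langle x_0,K_t x_0\rangle\geq n\,e^{-(2-\theta)t}$ (positive definiteness of $K_t$, worst rate $\gamma_k\leq 2-\theta$) against an additive variance $O(n\,\polylog n)$; for the worst-case spectral profile (mass on the top mode) the two signal-to-noise computations coincide, and both give the threshold $\tfrac{1}{4-2\theta}\log n$ uniformly in $x_0$, which is all the proposition needs. What the paper's weighting buys is per-initial-condition sharpness: by Cauchy--Schwarz its SNR dominates yours for every $x_0$, and e.g.\ for $x^{\blt}$ (eigenvalue $1$) it yields the stronger $\tfrac12\log n$ bound of~\eqref{eq:bltUBound}, which a worst-eigenvalue bound on $\langle x_0,K_tx_0\rangle$ cannot see.

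Two statements in your variance step need repair, though neither is fatal. First, $\|\widetilde C(t)\|_{\mathrm{op}}=O(1)$ is not literally true: $\widetilde C(t)=\Cov_{x_0}(X(t))+m(t)m(t)^{\mathsf T}$ with $m(t)=K_t x_0$, and the rank-one part has norm $\|m(t)\|_2^2$, which at $t=\tpluss$ is polylogarithmically large; it is exactly cancelled by the subtracted $\langle x_0,K_t x_0\rangle^2$, so what you actually need is control of the quadratic form of the centred covariance matrix. Second, $|\widetilde C_{ij}(t)-\tanh(\beta)^{|i-j|}|$ is \emph{not} bounded by the coalescence-before-refresh probability alone: there is also the contribution of the event that both backward histories survive to time $0$ without coalescing, which is of order $e^{-2\theta t}$ and does not decay in $|i-j|$. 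The clean substitute is the paper's finite-speed-of-propagation estimate (Lemmas~\ref{l:spread} and~\ref{l:decorrelation}): for $|i-j|\geq \log^2 n$ the two spins are, up to an event of probability $O(n^{-10})$, functions of disjoint sets of updates, hence $|\Cov_{x_0}(X_i(\tpluss),X_j(\tpluss))|=O(n^{-10})$, while nearby pairs are bounded by $1$; this gives $\var_{x_0}(T(X_{\tpluss}))=O(n\log^2 n)$, which suffices since your signal is $\sqrt n\,(\log n)^{8(2-\theta)}$.
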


As a consequence of this result and Theorem~\ref{mainthm-trans}, Part~(i), we see that the initial condition $x^\alt$ is indeed the optimal \emph{deterministic} one in the range $\beta < \beta_0$, and that $\beta_0$ marks the smallest  $\beta$ where a deterministic initial condition can first match the performance of the annealed random condition.

The mixing time estimates in Theorem~\ref{mainthm-trans} (as well as those in Theorem~\ref{mainthm-previous-bounds}) imply, in particular, that Glauber dynamics for the Ising model on the cycle, from the respective starting configurations, exhibits the \emph{cutoff phenomenon}---a sharp transition in its distance from stationarity, which drops along a negligible time period known as the \emph{cutoff window} (here, $O(\log\log n)$, vs.\ $\tmix$  which is of order $\log n$) from near its maximum to near 0. Until recently, only relatively few occurrences of this phenomenon, that was discovered by Aldous and Diaconis in the early 1980's (see~\cites{Aldous,AD,DiSh,Diaconis}), were rigorously verified,
even though it is believed to be widespread (e.g., Peres conjectured~\cite{LLP}*{Conjecture~1},\cite{LPW}*{\S23.2} cutoff for the Ising model on any sequence of transitive graphs when the mixing time is of order $\log n$); see~\cite{LPW}*{\S18}.

For the Ising model on the cycle, the longstanding lower and upper bounds on $\tmix$ from a worst-case initial condition differed by a factor of 2---in our notation, $\frac{1-o(1)}{2\theta}\log n$ and $\frac{1+o(1)}\theta\log n$---while cutoff was conjectured to occur (see, e.g.,~\cite{LPW}*{Theorem~15.4}, as well as~\cite{LPW}*{pp.~214,248 and Question~8 in p.~300}). This was confirmed in~\cite{LS1}, where the above lower bound was shown to be tight, via a proof that relied on log-Sobolev inequalities and applied to $\Z^d$, for any dimension $d\geq 1$, so long as the system features a certain decay-of-correlation property known as strong spatial mixing. This result was reproduced in~\cite{LS4} (with a finer estimate for the cutoff window) via the new information percolation method. Soon after, a remarkably short proof of cutoff for the cycle---crucially hinging on the correspondence between the one-dimensional Ising model and the ``noisy voter'' model---was obtained by Cox, Peres and Steif~\cite{CPS}. It is worthwhile noting that the arguments both in~\cite{CPS} and in~\cite{LS1} are tailored to worst-case analysis, and do not seem to be able to treat specific initial conditions as examined here. In contrast, the information percolation approach does allow one to control the subtle effect of various   initial conditions on mixing.

To conclude this section, we conjecture that Proposition~\ref{p:autoLowerB} also holds for $\tpluss =\max\{\frac{1-o(1)}{4-2\theta},\frac{1-o(1)}{4\theta}\}\log n$, i.e., that $x^\alt$ is asymptotically fastest among all the deterministic initial conditions at all $\beta>0$.
We further conjecture that the obvious generalization of $x^\alt$ to $(\Z/n\Z)^d$ for $d\geq 2$ (a checkerboard for $d=2$) is the analogous fastest deterministic  initial condition  throughout the high-temperature regime.

\section{Update support and information percolation}\label{sec:upperBound}

In this section we define the update support and use the framework of information percolation (see the papers~\cite{LS3,LS5} as well as the survey paper~\cite{LS6} for an exposition of this method)  to upper bound the total variation distance with alternating and bi-alternating initial conditions.

\subsection{Basic Notation}

The Ising model on a finite graph $G$ with vertex-set $V$ and edge-set $E$ is a distribution over the set of configurations $\Omega=\{\pm1\}^V$;
each $\sigma\in\Omega$ is an assignment of plus/minus \emph{spins} to the sites in $V$, and the probability of $\sigma \in \Omega$ is given by the Gibbs distribution
\begin{equation}
  \label{eq-Ising}
  \pi(\sigma)  = \mathcal{Z}^{-1} e^{\beta \sum_{uv\in E} \sigma(u)\sigma(v) } \,,
\end{equation}
where $\mathcal{Z}$ is a normalizer (the partition-function) and $\beta$ is the inverse-temperature, here taken to be non-negative (ferromagnetic).
The (continuous-time) heat-bath Glauber dynamics for the Ising model is the Markov chain---reversible w.r.t.\ the Ising measure $\pi$---where each site is associated with a rate-1 Poisson clock, and as the clock at some site $u$ rings, the spin of $u$ is replaced by a sample from the marginal of $\pi$ given all other spins. See~\cite{Martinelli97} for an extensive account of  this dynamics. In this paper we focus on the graph $G=\Z/n\Z$ and will let $X_t$ denote the Glauber dynamics Markov chain on $G$.

An important notion of measuring the convergence of a Markov chain $(X_t)$ to its stationarity measure $\pi$ is its total-variation mixing time, denoted $\tmix(\epsilon)$ for a precision parameter $0<\epsilon<1$.  From initial condition $x_0$ we denote
\[ \tmix^{x_0}(\epsilon) = \inf\big\{t \;:\;  \| \P_{x_0}(X_t \in \cdot)- \pi\|_\tv \leq \epsilon \big\}\,,\]
and the overall mixing time as measured from a worst-case initial condition is
\[ \tmix(\epsilon) = \max_{x_0 \in \Omega}\tmix^{x_0}(\epsilon) \,,\]
where here and in what follows $\P_{x_0}$ denotes the probability given $X_0=x_0$, and the total-variation distance
$\|\mu_1-\mu_2\|_\tv$ is defined as $\max_{A\subset \Omega} |\mu_1(A)-\mu_2(A)| = \tfrac12\sum_{\sigma\in\Omega} |\mu_1(\sigma)-\mu_2(\sigma)|$.

\subsection{Information percolation clusters}
The dynamics can be viewed as a deterministic function of $X_0$ and a random ``update sequence'' of the form $(J_1,U_1,t_1),(J_2,U_2,t_2),\ldots$, where $0<t_1<t_2<\ldots$ are the update times (the ringing of the Poisson clocks), the $J_i$'s are i.i.d.\ uniformly chosen sites (which clocks ring), and the $U_i$'s are i.i.d.\ uniform variables on $[0,1]$ (to generate coin tosses).  There are a variety of ways to encode such updates but in the case of the one-dimensional model there is a particularly useful one.  We add an extra variable $S_i$ which is a randomly selected neighbor of $U_i$ Then given the sequence of $(J_i, S_i, U_i, t_i)$  the updates are processed sequentially as follows: set $t_0=0$; the configuration $X_t$ for all $t\in [t_{i-1},t_i)$ ($i\geq 1$) is obtained by updating the site $J_i$ via the unit variable as follows: if $U_i \leq \theta=1 - \tanh(2\beta)$ update the spin at $J_i$ to a  uniformly random value and with probability $1-\theta$ set it to the spin of $S_i$.

With this description of the dynamics, we can work backwards to describe how the configurations at time $\tpluss$ (or at any intermediate time) depend on the initial condition.
\emph{The update support function}, denoted $\fsup(A,s_1,s_2)$, as introduced in~\cite{LS1}, is the random set whose value is  the minimal subset $S\subset \Lambda$ which determines the spins of $A$ given the update sequence  along the interval $(s_1,s_2]$.

We now describe the support of a vertex $v\in V$ as it evolves backwards in time from $s_2$ to $s_1$.
Initially, $\fsup(v,s_2,s_2)=\{v\}$; then,  updates in reverse chronological order alter the support: given the next update $(J_i, S_i, U_i, t_i)$, if $J_i = \fsup(v,t_{i+1},s_2)$ and $U_i \leq \theta$ then $\fsup(v,t_{i},s_2)$ is set to $ \emptyset$, and if $U_i > \theta$ then it is set to $ S_i$. Thus, backwards in time $\fsup(v,t,s_2)$ performs a continuous-time simple random walk with jump rate $1-\theta$ which is killed at rate $\theta$.  We refer to the full trajectory of the update support of a vertex as the \emph{history} of the vertex.  The survival time for a walk is exponential and so for $t_1 \leq t_2$,
\begin{equation}\label{eq:survivalProb}
\P\left(\fsup(v,t_1,t_2) \neq \emptyset\right) = e^{-(t_2-t_1) \theta}\,.
\end{equation}
For general sets $A$ we have that $\fsup(A,s_1,s_2)= \bigcup_{v\in A} \fsup(v,s_1,s_2)$ and taken together the collection of the update supports of the vertices are a set of coalescing killed continuous-time random walks.

A key use of these histories is to effectively bound the spread of information, as achieved by the following lemma.
\begin{lemma}\label{l:spread}
For any $t$ we have that
\[
\P \bigg(\max_{v\in \Z/ n\Z} \max_{\substack{0\leq s \leq t\\ \fsup(v,s,t) \neq \emptyset}} |v - \fsup(v,s,t)| \geq \tfrac1{10} \log^2 n\bigg) \leq O(n^{-10})\,.
\]
\end{lemma}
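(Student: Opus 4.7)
The plan is to reduce the displacement to the number of jumps of an auxiliary random walk and then apply a Poisson tail bound and a union bound. The key observation is that for each fixed vertex $v$, the process $s \mapsto \fsup(v,s,t)$, as $s$ decreases from $t$, is a continuous-time simple random walk on $\Z/n\Z$ started at $v$, with jump rate $1-\theta$, subjected to independent killing at rate $\theta$. Let $\tau_v$ denote its killing time (exponential of rate $\theta$) and let $N_v$ denote the total number of jumps the walk executes over its (backward) lifetime in $[\max(0,t-\tau_v),\,t]$; by the triangle inequality on $\Z/n\Z$,
\[
|v - \fsup(v,s,t)| \le N_v \qquad \text{for every $s$ at which the history is alive.}
\]
It therefore suffices to show that $\P(\max_v N_v \ge L) = O(n^{-10})$ for $L := \tfrac{1}{10}\log^2 n$.

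Conditionally on its effective lifetime $T_v := \min(\tau_v, t)$, the count $N_v$ is distributed as $\Po((1-\theta)T_v)$. I would then split
\[
\P(N_v \ge L) \;\le\; \P(\tau_v \ge L/2) \;+\; \P\bigl(\Po((1-\theta)L/2) \ge L\bigr).
\]
The first summand equals $e^{-\theta L/2} = \exp(-\tfrac{\theta}{20}\log^2 n)$, which decays faster than any polynomial in $n$. The second summand is controlled by the standard Chernoff estimate $\P(\Po(\mu) \ge k) \le e^{-\mu}(e\mu/k)^k$; taking $\mu \le L/2$ and $k = L$ yields the bound $\exp\bigl((\tfrac12 - \log 2)L\bigr)$, again super-polynomial in $n$.

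Combining these two super-polynomial tail bounds gives $\P(N_v \ge L) = O(n^{-11})$ uniformly in $v$, so a union bound over the $n$ vertices yields the desired $O(n^{-10})$ probability. The only mild subtlety is the supremum over $s \in [0,t]$, but it is handled for free: the displacement at any time $s$ is bounded by the number of jumps executed between $s$ and $t$, which is in turn dominated by $N_v$, so no maximal inequality is required. I do not foresee any real obstacle here---this is essentially a Poisson tail bound applied to a killed random walk, and the constants are chosen generously enough that the $n^{-11}$ per-vertex tail leaves ample room for the union bound.
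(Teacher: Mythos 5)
Your proof is correct and follows essentially the same route as the paper: control the history's lifetime via the exponential rate-$\theta$ killing, then bound the number of jumps in the remaining window by a Poisson tail estimate, and finish with a union bound over the $n$ vertices. The only cosmetic difference is the truncation level (you cut each lifetime at $\tfrac{1}{20}\log^2 n$ per vertex, whereas the paper first discards all histories surviving past time $t-\log^{3/2}n$), which changes nothing of substance.
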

\begin{proof}
By equation~\eqref{eq:survivalProb} we have that $\P[\fsup(\Z/n\Z , t-\log^{3/2} n, t) \neq \emptyset] = O(n^{-10})$ so it is sufficient to show that
\[
\P \bigg(\max_{\substack{t-\log^{3/2} n \leq s \leq t\\ \fsup(v,s,t) \neq \emptyset}} |v - \fsup(v,s,t)| \geq \tfrac1{10} \log^2 n\bigg) \leq O(n^{-11})\,.
\]
This probability is bounded above by the probability of a rate $1-\theta$ continuous-time random walk to make at least $\frac1{10} \log^2 n$ jumps by time $\log^{3/2} n$.  This is exactly the probability that a Poisson with mean $(1-\theta) \log^{3/2} n$ is at least $\frac1{10} \log^2 n$, which satisfies the required bound by standard tail bounds.
\end{proof}

\section{Upper bounds}

We will consider the dynamics run up to time $\tpluss$ and derive an upper bound on its mixing time.  We will first estimate the total variation distance not of the full dynamics but simply at a single vertex from initial conditions $x^{\alt}$ and $x^{\blt}$.
\begin{lemma}\label{l:singlePoint}
For $v\in \Z/n\Z$ we have that,
\begin{align*}
\left\|\P_{x^{\alt}}\left(X_{\tpluss}(v) \in \cdot \right) - \pi|_v \right\|_\tv &= \tfrac12 e^{-(2-\theta)\tpluss}\,,\\
\left\|\P_{x^{\blt}}\left(X_{\tpluss}(v) \in \cdot \right) - \pi|_v \right\|_\tv &= \tfrac12 e^{- \tpluss}\,.
\end{align*}
\end{lemma}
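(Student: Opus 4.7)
The plan is to leverage the update support construction of Section~\ref{sec:upperBound} at a single site $v$, together with an elementary characteristic-function computation for continuous-time simple random walk on $\Z/n\Z$.

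The first step is to unpack the update rule at $v$ to establish the following dichotomy. Let $A=\{\fsup(v,0,\tpluss)\neq\emptyset\}$; by~\eqref{eq:survivalProb}, $\P(A)=e^{-\theta\tpluss}$. On $A^c$, the backward history dies at some event, which corresponds (forward in time) to a uniform refresh update that propagates to $v$ through a chain of neighbor-copy updates, so $X_{\tpluss}(v)$ is a uniform $\pm1$ independent of $x_0$. On $A$, the history terminates at a single site $W_0:=\fsup(v,0,\tpluss)$ and $X_{\tpluss}(v)=x_0(W_0)$. Since $\pi|_v$ is uniform on $\{\pm1\}$ by the $\sigma\mapsto-\sigma$ symmetry of~\eqref{eq-Ising}, combining these yields
\[\bigl\|\P_{x_0}(X_{\tpluss}(v)\in\cdot)-\pi|_v\bigr\|_\tv = \tfrac12\bigl|\E_{x_0}[X_{\tpluss}(v)]\bigr| = \tfrac12\, e^{-\theta\tpluss}\,\bigl|\E[x_0(W_0)\mid A]\bigr|.\]

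Next I would identify the law of $W_0\mid A$. By the thinning description of the history, jumps and kills occur along the walker's lifetime as independent Poisson processes of rates $1-\theta$ and $\theta$ respectively, so conditioning on survival leaves the jump trajectory intact; hence $W_0\mid A$ is the position at time $\tpluss$ of a continuous-time SRW on $\Z/n\Z$ of jump rate $1-\theta$ started at $v$. The standard characteristic-function identity
\[\E\bigl[e^{i\alpha W_0}\,\big|\,A\bigr]=e^{i\alpha v}\exp\!\bigl(-(1-\theta)\tpluss(1-\cos\alpha)\bigr)\]
then closes both cases at once. For $x^\alt(i)=(-1)^i$, take $\alpha=\pi$ (so $1-\cos\alpha=2$) to obtain $|\E[x^\alt(W_0)\mid A]|=e^{-2(1-\theta)\tpluss}$, yielding $\tfrac12 e^{-(2-\theta)\tpluss}$; for $x^\blt(i)=\cos(\pi i/2)-\sin(\pi i/2)$, take $\alpha=\pi/2$ (so $1-\cos\alpha=1$) to obtain $|\E[x^\blt(W_0)\mid A]|=e^{-(1-\theta)\tpluss}$, yielding $\tfrac12 e^{-\tpluss}$.

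The only step requiring real care is the dichotomy in the first paragraph---tracking the forward consequence of a backward death event, and verifying that the uniform coin tossed at that refresh is independent of the rest of the randomness---but this is built into the update-support construction and is routine bookkeeping. The rest is just the MGF of a Poisson random variable. The assumption $4\mid n$ is exactly what ensures that $x^\alt$ and $x^\blt$ (equivalently the characters at $\alpha=\pi$ and $\alpha=\pi/2$) are well-defined on $\Z/n\Z$.
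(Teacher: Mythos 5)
Your proposal is correct and takes essentially the same approach as the paper: decompose according to whether the backward history of $v$ is killed before time $0$ (probability $1-e^{-\theta\tpluss}$, yielding a uniform spin) or survives as a rate-$(1-\theta)$ random walk, then compute the expected value of the initial condition at the walk's position. The only difference is cosmetic: the paper evaluates $\E[x_0(\fsup(v,0,\tpluss))\mid \text{survival}]$ by viewing the sign process as a two-state flip chain (flip rate $2(1-\theta)$ for $x^{\alt}$ and $(1-\theta)$ for $x^{\blt}$), whereas you use the characteristic function of the walk at $\alpha=\pi$ and $\alpha=\pi/2$---the same computation in different clothing.
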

\begin{proof}
We will begin with the case of initial condition $x^{\alt}$.  Of course $\pi|_v$ is the uniform measure on~$\{\pm 1\}$.  The history $\fsup(v,t,\tpluss)$ is killed before time $0$ with probability $1-e^{-\theta \tpluss}$ and on this event is uniform on $\{\pm 1\}$.  Condition that it survives to time $0$ and let $Y(s) = x^{\alt}(\fsup(v,\tpluss-s,\tpluss))$.  This is simply a continuous-time random walk on $\{\pm 1\}$ which switches state at rate $1-\theta$.  Thus,
\[
\P\left(Y(s) = a\right) = \begin{cases}
                 \frac12 + \frac12 e^{-2(1-\theta)s} & \mbox{if } a = x^{\alt}(v)\,, \\
                 \frac12 - \frac12 e^{-2(1-\theta)s} & \mbox{otherwise}\, .
               \end{cases}
\]
It therefore follows that $\left\|\P\left(Y(\tpluss) \in \cdot\right) - \pi|_v \right\|_\tv = \frac12 e^{-2(1-\theta)\tpluss}$, and altogether,
\[
\left\|\P_{x^{\alt}}\left(X_{\tpluss}(v) \in \cdot \right) - \pi|_v \right\|_\tv =  \tfrac12  e^{-2(1-\theta)\tpluss} e^{-\theta \tpluss} = \tfrac12 e^{-(2-\theta)\tpluss}\,.
\]
The case of $x^{\blt}$ follows similarly, with the exception that $Y(s)$ has jump rate $\frac12(1-\theta)$ since it only switches sign with probability $\frac12$ each step.
\end{proof}

\subsection{Update Support}
In this subsection we analyse the geometry of the update support similarly to~\cite{LS1} in order to approximate the Markov chain as a product measure.  Let $\kappa=\frac{4}{1-\theta}$ and define the support time as $\tsups = \tpluss - \kappa \log \log n$.  By Lemma~\ref{l:spread} we expect the histories to not travel ``too far'' along the time-interval $\tpluss$ to $\tsups$; precisely, if we  define $\cB$ as the event
\[
\cB=\bigg \{ \max_{v\in \Z / n \Z} \max_{\substack{\tsups\leq s \leq \tpluss\\ \fsup(v,s,\tpluss) \neq \emptyset}} |v - \fsup(v,s,\tpluss)| \leq \tfrac1{10} \log^2 n \bigg\}\,,
\]
then by Lemma~\ref{l:spread},
\begin{equation}\label{eq:cBBound}
\P\left(\cB\right) \geq 1 - n^{-10}\,.
\end{equation}
The following event says that the support at time $\tsups$ clusters into small well separated components.
Let $\mathcal{A}$ be the event that there exists a set of intervals $W_1,\ldots,W_m \subset \Z / n\Z$  that (i) cover the support:
\begin{equation}\label{eq-intervals-prop1}
\left\{x: \fsup(x, \tsups,\tpluss) \neq \emptyset\right\} \subset \bigcup_{i} W_i\,,	
\end{equation}
(ii) have logarithmic size:
\begin{equation}\label{eq-intervals-prop2}
\max_i W_i \leq \log^3 n\,,
\end{equation}
and (iii) are well-separated:
\begin{equation}\label{eq-intervals-prop2}
\min_{i,i'} \  d(W_i,W_{i'}) \geq  \log^2 n\,.
\end{equation}
\begin{lemma}
We have that $\P\left(\cA\right) \geq 1 - O(n^{-9})$.
\end{lemma}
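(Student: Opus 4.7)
The plan is to construct the $W_i$'s directly from the proximity structure of $S:=\{x:\fsup(x,\tsups,\tpluss)\neq\emptyset\}$ and reduce the failure of the size bound to the nonexistence of long chains in $S$. I take $W_1,\ldots,W_m$ to be the minimal arcs of $\Z/n\Z$ covering the connected components of the graph on $S$ whose edges join pairs at cyclic distance strictly less than $\log^2 n$. The separation property is then automatic: any two adjacent components are, by the definition of the graph, at distance at least $\log^2 n$. The size property can fail only if some component has diameter at least $\log^3 n$, in which case, ordering its $S$-vertices cyclically as $v_0<v_1<\cdots<v_k$ with all gaps $v_{i+1}-v_i<\log^2 n$ and $v_k-v_0\geq\log^3 n$, we must have $k\geq\log n$.

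Given such a chain, I greedily extract a sparse subchain $v_{i_0}<v_{i_1}<\cdots<v_{i_\ell}$ whose consecutive gaps all lie in $[\tfrac15\log^2 n,\tfrac65\log^2 n]$; since the total span is at least $\log^3 n$ and each sparse gap is at most $\tfrac65\log^2 n$, the subchain has length $\ell\geq\tfrac56\log n$. On the event $\cB$, the backward history of a vertex $v$ is confined to the tube $I(v):=[v-\tfrac1{10}\log^2 n,v+\tfrac1{10}\log^2 n]$, and since consecutive subchain vertices lie at distance at least $\tfrac15\log^2 n$, the tubes $I(v_{i_0}),\ldots,I(v_{i_\ell})$ are pairwise disjoint. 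Let $A_j$ denote the event that the backward history of $v_{i_j}$ both survives to time $\tsups$ and remains inside $I(v_{i_j})$. Each $A_j$ is measurable with respect to the updates $(J_i,S_i,U_i,t_i)$ having $J_i,S_i\in I(v_{i_j})$, and the disjointness of the tubes therefore forces the family $\{A_j\}$ to be mutually independent, with $\P(A_j)\leq e^{-\theta\kappa\log\log n}=(\log n)^{-4\theta/(1-\theta)}=:p$ by \eqref{eq:survivalProb}.

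A union bound closes the argument. The number of possible sparse subchains is at most $n\cdot(\log^2 n)^{\ell-1}$ (a starting vertex together with a sequence of $\ell-1$ integer gaps, each chosen from an interval of length $\log^2 n$), so
\[
\P(\cA^c)\;\leq\;\P(\cB^c)\,+\,n\cdot(\log^2 n\cdot p)^{\ell}\;\leq\;n^{-10}\,+\,n\cdot(\log n)^{\left(2-\frac{4\theta}{1-\theta}\right)\ell}.
\]
In the regime relevant to the upper bound for the alternating and bi-alternating initial conditions, the exponent $2-4\theta/(1-\theta)$ is a strictly negative constant; combined with $\ell\geq\tfrac56\log n$ this yields a right-hand side of the form $n\exp(-c\log n\log\log n)$, comfortably $O(n^{-9})$ for $n$ large.

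The delicate step is justifying the exact independence above: one must check that \emph{on the event} $A_j$ the entire trajectory of the history (and any coalescences it undergoes) remains inside $I(v_{i_j})$, so that $A_j$ is genuinely a function of the updates supported spatially in $I(v_{i_j})$. Once this is secured, the disjointness of the tubes guarantees that different $A_j$'s are driven by disjoint and independent pieces of the Poisson update data, and the product bound $\prod_j \P(A_j)\leq p^{\ell}$ holds without any correction terms.
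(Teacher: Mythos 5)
Your construction is a genuinely different route from the paper's. The paper never builds the $W_i$'s out of the support directly: it lays down a fixed grid of blocks $M_i$ of length $\log^2 n$, spaced $\log^2 n$ apart, sets $\cD_i=\{\fsup(M_i,\tsups,\tpluss)=\emptyset\}$, shows each $\cD_i$ fails with probability at most $1/\log n$, and uses conditional independence of the $\cD_i$'s given the spread event to show that with probability $1-O(n^{-9})$ every window of $\tfrac1{10}\log n$ consecutive blocks contains a dead block; these dead blocks are the separators from which the $W_i$'s are read off. Your argument instead chains the support at scale $\log^2 n$ and bounds the probability of a long component by a union bound over sparse subchains of \emph{surviving} vertices, using independence of the single-vertex survival events across disjoint tubes. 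Both proofs rest on the same two inputs (the survival estimate \eqref{eq:survivalProb} over the window of length $\kappa\log\log n$, and Lemma~\ref{l:spread} to localize histories), and the measurability point you flag is indeed fine: the event that the backward walk from $v_{i_j}$ survives to $\tsups$ without leaving $I(v_{i_j})$ is a function of the update marks attached to sites of $I(v_{i_j})$ only, since any killing or exit is triggered by an update at the current, in-tube, location, and coalescence plays no role for single-vertex histories. (A cosmetic point: if a sparse gap equals exactly $\tfrac15\log^2 n$ the closed tubes share an endpoint; take the gaps strictly larger or shrink the radius slightly.)

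The genuine gap is the last step. The assertion that $2-4\theta/(1-\theta)$ is ``a strictly negative constant in the regime relevant to the upper bound'' is unsupported: the lemma is needed for \emph{every} $\beta>0$, i.e.\ every $\theta\in(0,1)$, and for $\theta\le 1/3$ this exponent is nonnegative, so your bound $n(\log n)^{(2-4\theta/(1-\theta))\ell}$ diverges and the union bound does not close. To be fair, the same tension sits in the paper's own display: with kill rate $\theta$ as in \eqref{eq:survivalProb}, the estimate $\P(\cD_i)\ge 1-|M_i|\,e^{-\theta\kappa\log\log n}\ge 1-\tfrac1{\log n}$ requires $\theta\kappa\ge 3$, which the printed $\kappa=\tfrac4{1-\theta}$ delivers only for $\theta\ge 3/7$; the intended constant is evidently of order $1/\theta$. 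With $\kappa\ge 4/\theta$ the per-vertex survival probability is $p\le\log^{-4}n$, your per-step factor becomes $(\log^2 n)\,p\le\log^{-2}n$, and $\ell\ge\tfrac56\log n$ gives a bound of order $n\exp(-\tfrac53\log n\,\log\log n)$, far below $n^{-9}$, uniformly in $\beta$. So your argument is correct for all $\beta$ once $\kappa$ is taken large enough in terms of $\theta$ (as the paper's own computation implicitly requires); as written, taking $\kappa=4/(1-\theta)$ at face value and relying on the ``relevant regime'' claim, it only establishes the lemma for $\theta>1/3$.
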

\begin{proof}
Define the following intervals on $\Z/n\Z$:
\[
M_i  = \{2i \log^2 n, \ldots, (2i+1) \log^2 n\}\qquad(1\leq i \leq \tfrac{n}{2\log^2 n})\,.\]
  Restricting $\cB$ to $\bigcup M_i$, we let
\[
\cB'=\biggl \{ \max_{v\in \cup_i M_i}  \max_{\substack{\tsups\leq s \leq \tpluss\\ \fsup(v,s,\tpluss) \neq \emptyset}} |v - \fsup(v,s,\tpluss)| \leq \tfrac1{10} \log^2 n \biggr\}\,.
\]
Since $\cB' \supset \cB$ we have that $\P\left(\cB'\right)  \geq 1 - n^{-10}$ by Lemma~\ref{eq:cBBound}. Next, let $\cD_i$ be the event
\[
\cD_i = \{ \fsup(M_i,\tsups,\tpluss) = \emptyset \}\,.
\]
By a union bound and equation~\eqref{eq:survivalProb}, we have that
\[
\P\left(\cD_i\right) \geq 1 - |M_i| e^{ (1-\theta) \kappa \log \log n} \geq 1 - \frac1{\log n}\,,
\]
and so
\[\P\left(\cD_i^c \mid \cB'\right) \leq \frac{\P\left(\cD_i^c\right)}{\P\left(\cB'\right)} \leq \frac2{\log n}\,.\]  Moreover, conditional on $\cB'$ the events $\cD_i$ are conditionally independent since the history of $M_i$ is determined by the updates within the set $\{v:d(v,M_i) \leq \frac1{10} \log^2 n\}$ which are disjoint.  Hence, for all $i$,
\[
\P\left(\cD_i^c,\cD_{i+1}^c,\ldots, \cD_{i+\frac1{10}\log n}^c\mid \cB'\right) \leq \left(\frac2{\log n}\right)^{\frac1{10} \log n} \leq n^{-10};
\]
hence,
\[
\P\left(\cD_i^c,\cD_{i+1}^c,\ldots, \cD_{i+\frac1{10}\log n}^c\right) \leq \P\left(\cD_i^c,\cD_{i+1}^c,\ldots, \cD_{i+\frac1{10}\log n}^c\;\Big|\; \cB'\right) + \P\left({\cB'}^c\right) \leq 2 n^{-10}.
\]  Taking a union bound over all $i$ we have that
\[
\P\left(\exists i\,:\; \cD_i^c,\cD_{i+1}^c,\ldots, \cD_{i+\frac1{10}\log n}^c\right) \leq  n^{-9}.
\]
We have thus arrived at the following: with probability at least $1 -  n^{-9}$, for every $v\in \Z/n \Z$ there exists a block of $\log^2 n$ consecutive vertices whose histories are killed before $\tsups$ within distance $\frac15 \log^3 n$ on both the right and the left, implying the existence of the decomposition and completing the lemma.
\end{proof}

When the event $\mathcal{A}$ holds we will assume that there is some canonical choice of the $W_i$'s.  We set
\begin{equation}\label{eq-Vi-det}
V_i = \fsup(W_i, \tsups,\tpluss)\,.	
\end{equation}
 On the event that both $\cA$ and $\cB$ hold, the sets $V_i$ are disjoint, and satisfy
\begin{equation}\label{eq:VProperties}
\min_{i,i'} d(V_i,V_{i'}) \geq \tfrac12 \log^2 n \quad \hbox{ and } \quad \max_i \diam(V_i) \leq 2 \log^3 n\,.
\end{equation}
We will make use of Lemma~3.3 from \cite{LS3}, a special case of which is the following.
\begin{lemma}[\cite{LS3}]\label{l:updateSuppInq}
For any $0\leq s \leq t$ and any set of vertices $W$ we have that
\[
\big \| \P_{x_0}\left(X_t(W) \in \cdot\right) - \pi|_W \big  \|_\tv \leq \E \left[ \Big\| \P_{x_0}\left(X_s(\fsup(W,s,t)) \in \cdot\right) - \pi|_{\fsup(W,s,t)} \Big \|_\tv  \right]\, .
\]
\end{lemma}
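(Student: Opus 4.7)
The plan is to exploit the grand coupling of the dynamics through the update sequence $(J_i,S_i,U_i,t_i)$, together with the data processing inequality for total variation: a deterministic map cannot increase the total variation distance between two distributions.

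By the very definition of the update support, given the updates $\cU$ falling in the window $(s,t]$, the configuration $X_t(W)$ is a deterministic function $\phi_\cU$ of $X_s(\fsup(W,s,t))$---nothing outside $\fsup(W,s,t)$ can influence $X_t(W)$ once these updates are fixed. First I would make this deterministic map explicit so that the same $\phi_\cU$ can be applied to \emph{any} initial configuration, and then build a parallel stationary trajectory $(\tilde X_u)_{u\ge 0}$ started from $\tilde X_0\sim\pi$ independently of the update sequence and driven by the very same updates on $(s,t]$. Since $\pi$ is invariant, $\tilde X_s\sim\pi$ and $\tilde X_t\sim\pi$, while the same map still yields $\tilde X_t(W)=\phi_\cU(\tilde X_s(\fsup(W,s,t)))$. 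Thus, conditionally on $\cU$, the distribution $\pi|_W$ is the pushforward of $\pi|_{\fsup(W,s,t)}$ under $\phi_\cU$.

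Next I would note that $X_s$ depends only on $x_0$ and the updates on $(0,s]$, which are independent of $\cU$; consequently the conditional law $\P_{x_0}(X_s(\fsup(W,s,t))\in\cdot\mid\cU)$ coincides with the unconditional law $\P_{x_0}(X_s(F)\in\cdot)$ evaluated at the realized value $F=\fsup(W,s,t)$, and likewise for $\pi|_{\fsup(W,s,t)}$. Applying the data processing inequality to the deterministic map $\phi_\cU$ then gives, for each realization of $\cU$,
\[
\bigl\|\P_{x_0}(X_t(W)\in\cdot\mid\cU) - \pi|_W\bigr\|_\tv \;\le\; \bigl\|\P_{x_0}(X_s(\fsup(W,s,t))\in\cdot) - \pi|_{\fsup(W,s,t)}\bigr\|_\tv.
\]
The convexity of total variation ($\|\mu-\nu\|_\tv\le\E\|\mu(\cdot\mid\mathcal F)-\nu\|_\tv$ for $\nu$ deterministic given $\mathcal F$) then lets me integrate out $\cU$ on the left-hand side, which recovers exactly the stated bound.

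The only mild subtlety---and hence the main step to verify carefully---is the coordination of the two coupled processes (the $x_0$-started and the $\pi$-started ones) so that one and the \emph{same} map $\phi_\cU$ acts on both restrictions to $\fsup(W,s,t)$; this is what makes the data-processing contraction apply simultaneously to the numerator and denominator of the TV comparison. Everything else reduces to routine bookkeeping about the independence of disjoint time intervals of the Poisson update sequence.
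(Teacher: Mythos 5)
Your overall route---the grand coupling through the update sequence, the deterministic map $\phi_\cU$ from $X_s(\fsup(W,s,t))$ to $X_t(W)$, data processing, and averaging over the updates---is the right one (the paper does not prove this lemma itself but imports it from \cite{LS3}, where the argument is of exactly this type). However, one step as written is false: the claim that \emph{conditionally on} $\cU$, the law $\pi|_W$ is the pushforward of $\pi|_{\fsup(W,s,t)}$ under $\phi_\cU$. Stationarity only gives $\tilde X_t\sim\pi$ \emph{after} averaging over the updates; for a fixed realization of $\cU$ the conditional law of $\tilde X_t(W)$ is in general not $\pi|_W$. Concretely, suppose the last update before time $t$ is at some $v\in W$ with $U_i\le\theta$ (an oblivious refresh). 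Then $\fsup(\{v\},s,t)=\emptyset$, so the right-hand side of your per-realization inequality is $0$, while given $\cU$ the spin $X_t(v)$ is a deterministic value, so the left-hand side equals $\tfrac12$. Thus the conditional inequality
\[
\bigl\|\P_{x_0}(X_t(W)\in\cdot\mid\cU)-\pi|_W\bigr\|_\tv\le\bigl\|\P_{x_0}(X_s(\fsup(W,s,t))\in\cdot)-\pi|_{\fsup(W,s,t)}\bigr\|_\tv
\]
fails precisely in the typical case (killed histories) that the lemma is designed to exploit, and no convexity step afterwards can repair it.

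The fix keeps your ingredients but changes the order of ``condition, compare, average.'' Let $\mu_\cU=\P_{x_0}(X_t(W)\in\cdot\mid\cU)$ and $\nu_\cU=\P(\tilde X_t(W)\in\cdot\mid\cU)$, where $\tilde X$ is the stationary copy driven by the same updates on $(s,t]$ with $\tilde X_s\sim\pi$ independent of $\cU$. Stationarity gives $\pi|_W=\E_\cU[\nu_\cU]$, so by convexity of total variation applied to \emph{both} arguments, $\|\P_{x_0}(X_t(W)\in\cdot)-\pi|_W\|_\tv\le\E_\cU\|\mu_\cU-\nu_\cU\|_\tv$. Now, conditionally on $\cU$, both $\mu_\cU$ and $\nu_\cU$ are pushforwards under the \emph{same} map $\phi_\cU$ of $\P_{x_0}(X_s(F)\in\cdot)$ and $\pi|_F$ respectively, with $F=\fsup(W,s,t)$ (here you correctly use that $X_s$ and $\tilde X_s$ are independent of the updates in $(s,t]$). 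Data processing applied to this pair, followed by taking the expectation over $\cU$, yields the stated bound. In short: you must compare the two \emph{conditional} laws and only then average, rather than compare one conditional law to the unconditional $\pi|_W$.
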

\noindent Using this result, we have that
\[
\big \| \P_{x_0}\left(X_{\tpluss} \in \cdot\right) - \pi \big  \|_\tv \leq \E \left[ \Big\| \P_{x_0}\left(X_{\tsups}(\mbox{$\bigcup_i V_i$}) \in \cdot\right) - \pi|_{\cup_i V_i} \Big \|_\tv  \right]\, .
\]

\subsection{Coupling with product measures}
On the event $\cA\cap \cB$ we  couple  $X_{\tsups}(\bigcup_i V_i)$ and $\pi|_{\bigcup_i V_i}$ with product measures.  Since the $V_i$'s depend only on the updates along the interval $[\tsups,\tpluss]$ and are independent of the dynamics up to time $\tsups$ we will treat the $V_i$ as fixed deterministic sets satisfying~\eqref{eq:VProperties}.  Let $(\pi^{(1)},\ldots,\pi^{(m)})$ be a product measure of $m$ copies of $\pi$.  Then, by the exponential decay of correlation of the one-dimensional Ising model,
\begin{equation}\label{eq:piProdCouple}
\left\| (\pi^{(1)}|_{V_1},\ldots,\pi^{(m)})|_{V_m} - \pi\big|_{\bigcup_i V_i} \right\|_\tv \leq n^{-10}\,.
\end{equation}
Next, let $X^{(1)}_t,\ldots,X^{(m)}_t$ be $m$ independent copies of the dynamics up to time $\tsups$.  Define the event
\[
\cE=\biggl \{ \max_{v\in \cup_i V_i} \max_{\substack{0\leq s \leq \tsups\\ \fsup(v,s,\tsups) \neq \emptyset}} |v - \fsup(v,s,\tsups)| \leq \tfrac1{10} \log^2 n \biggr\}\,,
\]
and for each $1\leq j \leq m$ define the analogous event
\[
\cE^{(j)}=\biggl \{ \max_{v\in \cup_i V_i} \max_{\substack{0\leq s \leq \tsups\\ \fsup^{(j)}(v,s,\tsups) \neq \emptyset}} |v - \fsup^{(j)}(v,s,\tsups)| \leq \tfrac1{10} \log^2 n \biggr\}\,,
\]
where $\fsup^{(j)}$ is the support function for the dynamics $X^{(j)}_t$.  From Lemma~\ref{l:spread}, together with a union bound, we infer that
\begin{equation}\label{eq:cESpread}
\P\left(\cE\right) = \P(\cE^{(j)}) \geq 1 - O(n^{-10})\,.
\end{equation}
Let $\tilde{X}_t$ denote $X_t$ conditioned on $\cE$ and, similarly, let $\tilde{X}^{(j)}_t$ denote $X^{(j)}_t$ conditioned on $\cE^{(j)}$.  Then
\[
\left\|\P(\tilde{X}^{(j)}_{\tsups}(V_j) \in \cdot) - \P(X^{(j)}_{\tsups}(V_j) \in \cdot) \right\|_\tv \leq \P(\cE^{(j)}) \leq n^{-10},
\]
and so
\[
\left\|\P\left((\tilde{X}^{(1)}_{\tsups}(V_1),\ldots,\tilde{X}^{(m)}_{\tsups}(V_m)) \in \cdot\right) - \P\left((X^{(1)}_{\tsups}(V_1),\ldots,X^{(m)}_{\tsups}(V_m)) \in \cdot\right) \right\|_\tv  \leq  n^{-9}\,.
\]
Now, since the laws of the $\tilde{X}_{\tsups}(V_i)$ for distinct $i$ depend on disjoint sets of updates, they are independent and equal in distribution to $\tilde{X}^{(i)}_{\tsups}(V_i)$, hence
\[
(\tilde{X}^{(1)}_{\tsups}(V_1),\ldots,\tilde{X}^{(m)}_{\tsups}(V_m))  \stackrel{d}{=} (\tilde{X}_{\tsups}(V_1),\ldots,\tilde{X}_{\tsups}(V_m))\,.
\]
Since $\tilde{X}$ is $X$ conditioned on $\cE$,
\[
\left\|\P\left((\tilde{X}_{\tsups}(V_1),\ldots,\tilde{X}_{\tsups}(V_m)) \in \cdot\right) - \P\left((X_{\tsups}(V_1),\ldots,X_{\tsups}(V_m)) \in \cdot\right) \right\|_\tv \leq \P(\cE) \leq n^{-10}\,.
\]
Combining the previous three equations we find that
\begin{equation}\label{eq:XProductCoupling}
\left\|\P\left((X^{(1)}_{\tsups}(V_1),\ldots,X^{(m)}_{\tsups}(V_m)) \in \cdot\right) - \P\left((X_{\tsups}(V_1),\ldots,X_{\tsups}(V_m)) \in \cdot\right) \right\|_\tv  \leq 2n^{-9}\,.
\end{equation}
Thus, to show that $\big \| \P_{x_0}\left(X_{\tpluss} \in \cdot\right) - \pi \big  \|_\tv \to 0$ it is sufficient to prove that
\begin{equation}\label{eq:XProductTVBound}
\left\|\P\left((X^{(1)}_{\tsups}(V_1),\ldots,X^{(m)}_{\tsups}(V_m)) \in \cdot\right) - (\pi^{(1)}|_{V_1},\ldots,\pi^{(m)}|_{V_m}) \right\|_\tv \to 0\,.
\end{equation}

\subsection{Local $L^2$ distance}
Let $L=10$, and for each $i$ set
\[
S_i = \inf\left\{s\,:\; |\fsup(V_i, \tsups -s ,\tsups )| \leq L \right\}\,,
\]
with $S_i=0$ if $|V_i|\leq L$.

First we bound the right tail of the distribution of $S_i$.
If $|\fsup(V_i, \tsups -s ,\tsups )| > L$ then at least $L+1$ histories from $V_i$ have survived to time $\tsups -s$ and not intersected.  Hence, by equation~\eqref{eq:survivalProb},
\[
\P\left(|\fsup(V_i, \tsups -s ,\tsups )| > 10\right) \leq {|V_i| \choose L+1} e^{-(L+1) s\theta} \leq e^{-(L+1) s\theta} \log^{3(L+1)} n\,.
\]
Therefore, for $0<s<\tsups$ we see that
\begin{equation}\label{eq:SiBound}
\P\left(S_i \geq s\right) \leq e^{- s (L+1)\theta} \log^{3(L+1)} n\,.
\end{equation}
Let $\cI$ denote the event that for all $i$ we have that $S_i < \tsups$.  By~\eqref{eq:SiBound},
\begin{equation}\label{eq:cHBound}
\P\left(\cI\right) \leq e^{-(L+1)\theta  \tsups }\,  n \log^{3(L+1)} n\,,
\end{equation}
and so $\tpluss \geq \frac{2}{L \theta}\log n$ implies that $\P\left(\cI\right) \to 1$.
On the event $\cI$, we define
\[
U_i = \fsup(V_i, \tsups -S_i ,\tsups )\,.
\]
Applying Lemma~\ref{l:updateSuppInq} we have that
\begin{align}\label{eq:VtvBound}
&\left\|\P \left((X^{(1)}_{\tsups}(V_1),\ldots,X^{(m)}_{\tsups}(V_m)) \in \cdot\right) - (\pi^{(1)}|_{V_1},\ldots,\pi^{(m)}|_{V_m})\right\|_\tv\nonumber \\
&\qquad \leq  \left\|\P\left((X^{(1)}_{\tsups-S_1}(U_1),\ldots,X^{(m)}_{\tsups-S_m}(V_m)) \in \cdot\right) - (\pi^{(1)}|_{U_1},\ldots,\pi^{(m)}|_{U_m})\right\|_\tv\,.
\end{align}

\begin{lemma}\label{l:UiTVDist}
There exists $C=C(\beta)>0$ such that, for every $|U_i| \leq L$ and $0\leq S_i< \tsups$,
\begin{align*}
\left\|\P_{x_0}\left((X^{(i)}_{\tsups-S_i}(U_i) \in \cdot \;\Big|\; U_i,S_i\right) -  \pi^{(i)}|_{U_i} \right\|_\tv &\leq
\begin{cases}
C  \tsups \exp\left[-(\tsups-S_i)  \min\{2\theta,2-\theta\}\right]  &x_0 = x^{\alt}\,,\\
\noalign{\medskip}
C  \tsups \exp\left[-(\tsups-S_i)  \min\{2\theta,1\}\right]  &x_0 = x^{\blt}\,.
\end{cases}
\end{align*}
\end{lemma}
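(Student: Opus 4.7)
The plan is to apply the Fourier expansion of signed measures on $\{\pm 1\}^{U_i}$: writing $\chi_A(\sigma) = \prod_{v\in A}\sigma(v)$ and $t = \tsups - S_i$, we have
\[
\bigl\|\mu - \pi|_{U_i}\bigr\|_\tv \;\le\; \tfrac12 \sum_{\emptyset \ne A \subseteq U_i} \bigl|\E_\mu \chi_A - \E_{\pi|_{U_i}}\chi_A\bigr|,
\]
where $\mu$ denotes the conditional law of $X^{(i)}_t(U_i)$ given $U_i$ and $S_i$. Since $|U_i|\le L=10$ the sum has at most $2^L$ terms, so it suffices to show that each summand is at most $Ct\exp[-t\min\{2\theta,c\}]$ with $c = 2-\theta$ for $x^\alt$ and $c = 1$ for $x^\blt$.

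To analyze $\E_\mu\chi_A$ I condition on the backward histories $\{\fsup(v,\cdot,t)\}_{v\in A}$, a collection of coalescing continuous-time random walks with jump rate $1-\theta$ killed at rate $\theta$. These histories induce a coalescence partition of $A$: within each class $P$ the spins $\{X_t(v) : v\in P\}$ share a common value, namely $x_0(H_P(0))$ if the merged history survives to time $0$ and a fresh uniform $\pm 1$ if it is killed. Since $(\pm 1)^{|P|} = 1$ for even $|P|$ and equals the shared value for odd $|P|$, averaging over the fresh uniforms gives
\[
\E[\chi_A(X_t)\mid\text{histories}] \;=\; \one_{\cS} \prod_{P:\,|P|\text{ odd}} x_0(H_P(0)),
\]
where $\cS$ is the event that every odd-sized class survives to time $0$. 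Coupling $X_t$ with a stationary copy $Y_t\sim\pi$ driven by the same updates and repeating the calculation yields $\E[\chi_A(Y_t)\mid\text{histories}] = \one_{\cS}\,\E_\pi\prod_{|P|\text{ odd}}\sigma(H_P(0))$; when the partition has no odd class (possible only when $|A|$ is even) both quantities equal $1$ and their difference vanishes.

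By the triangle inequality,
\[
|\E_\mu\chi_A - \E_\pi\chi_A| \;\le\; \E\Bigl[\one_\cS \Bigl|\textstyle\prod_{|P|\text{ odd}} x_0(H_P(0))\Bigr|\Bigr] \;+\; \E\Bigl[\one_\cS \Bigl|\textstyle\E_\pi\!\prod_{|P|\text{ odd}}\sigma(H_P(0))\Bigr|\Bigr].
\]
For the first (``$x_0$'') term I exploit the parity structure: for $x^\alt$ the process $(-1)^{H_P(\cdot)}$ evolves as a continuous-time $\pm 1$ chain with switching rate $1-\theta$, so $|\E[(-1)^{H_P(0)}\one_{\{P\text{ survives}\}}]|\le e^{-(2-\theta)t}$ for each odd class, and the first term is at most $e^{-(2-\theta)t}$ as soon as there is at least one odd class (for $x^\blt$ the sign flips at only half of the walker's jumps, so the switching rate becomes $(1-\theta)/2$ and each odd class contributes $e^{-t}$). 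The second (``$\pi$'') term enjoys no cancellation; it vanishes whenever $|A|$ is odd since $\E_\pi\chi_A = 0$ by symmetry, and for even $|A|$ the partition contains at least \emph{two} odd classes, whose joint survival occurs with probability at most $e^{-2\theta t}$ (Poisson killing rates are independent of the spatial evolution of the walks, so this estimate is valid regardless of coalescence).

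Combining the two bounds, each summand is of order $e^{-\min\{2\theta,\,c\}\,t}$; summing over the $O(1)$ choices of $A$ and absorbing the residual polynomial factors --- which arise from integrating over the interaction window between coalescing walks and from the $O(\log^2 n)$ range bound on each history supplied by Lemma~\ref{l:spread} --- into the $C\tsups$ prefactor completes the proof. The main technical obstacle lies in justifying the ``independent survival'' estimate $e^{-2\theta t}$ for the $\pi$-term when histories of different classes may coalesce en route, and in ensuring the alternating cancellation of $e^{-(2-\theta)t}$ per odd class holds uniformly over the random coalescence pattern; the cleanest route is to keep the partition $\cS$ explicit throughout and exploit the fact that, once histories coalesce, the merged class behaves as a single walk for both survival and parity purposes.
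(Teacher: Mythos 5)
Your character-expansion setup is correct as far as it goes: the Fourier bound on total variation, the conditional formula $\E[\chi_A(X_t)\mid\text{histories}]=\one_{\cS}\prod_{|P|\text{ odd}}x_0(H_P(0))$, the stationary-copy comparison, and the parity bookkeeping (an odd $|A|$ forces an odd number of odd classes, an even $|A|$ with an odd class forces at least two) would indeed yield the exponent $\min\{2\theta,2-\theta\}$ (resp.\ $\min\{2\theta,1\}$) if your two quantitative claims were established. But there are genuine gaps. First, the displayed triangle inequality cannot be the bound you actually use: with the absolute values placed inside the expectations, its first term is $\P(\cS\cap\{\exists\text{ odd class}\})$, which already for $|A|=1$ equals $e^{-\theta t}$ --- much larger than the required $e^{-(2-\theta)t}$, since $\theta<\min\{2\theta,2-\theta\}$ for every $\beta>0$. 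So what you must bound is the signed quantity $\E\big[\one_{\cS}\prod_{|P|\text{ odd}}x_0(H_P(0))\big]$, i.e.\ you need genuine cancellation. Second, and this is the substantive gap that you yourself flag, the estimate $\big|\E[(-1)^{H_P(0)}\one_{\{P\text{ survives}\}}]\big|\le e^{-(2-\theta)t}$ is a computation for a single \emph{unconditioned} killed walk; in your decomposition it has to hold after conditioning on the coalescence partition (which walkers merged, when, which avoided one another, which were killed). That conditioning biases the jump count of the surviving lineage, and the parity of $H_P(0)$ is exactly a functional of the jump count, so the factor $e^{-2(1-\theta)t}$ does not follow without an argument. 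Note that when there are at least two odd classes no cancellation is needed ($\P(\cS)\le Ce^{-2\theta t}$, which is the standard two-disjoint-lineages survival bound and is fine); the problematic case is exactly one odd class, which is the case carrying the $2-\theta$ (resp.\ $1$) exponent, so the unproved step is the heart of the lemma.

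The clean way around this --- and it is the paper's route --- is to avoid conditioning on fine coalescence patterns altogether: let $R_i$ be the first backward time at which $\fsup(U_i,\cdot,\tsups-S_i)$ has coalesced to at most one point; non-coalescence requires two disjoint surviving lineages, so $\P(R_i>r\mid U_i,S_i)\le\binom{L}{2}e^{-2\theta r}$, and Lemma~\ref{l:updateSuppInq} reduces the total variation distance on $U_i$ to $\E\big[e^{-(2-\theta)(\tsups-S_i-R_i)}\mid U_i,S_i\big]$, because the single-site bound of Lemma~\ref{l:singlePoint} is applied on an earlier time interval whose updates are independent of those determining $R_i$ and the coalescence point. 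Decomposing over $R_i\in(k-1,k]$ and taking the largest term gives $C\tsups e^{-(\tsups-S_i)\min\{2\theta,2-\theta\}}$, and the same argument with jump rate $\tfrac12(1-\theta)$ for the parity chain gives $\min\{2\theta,1\}$ for $x^{\blt}$. If you want to keep the character expansion, you would still need an analogous decoupling (e.g.\ invoking Lemma~\ref{l:updateSuppInq} after full coalescence of each class) to justify the per-class cancellation uniformly over the coalescence pattern.
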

\begin{proof}
We will consider the case of $x^{\alt}$, the proof for $x^{\blt}$ follows similarly.
Let $R_i$ denote the first time the history coalesces to a single point:
\[
R_i = \inf\left\{r: |\fsup(U_i, \tsups -S_i-r ,\tsups -S_i )| \leq 1 \right\}\,,
\]
with the convention $R_i = \tsups -S_i$ if $|\fsup(U_i, 0 ,\tsups -S_i )| \geq 2$. By equation~\eqref{eq:survivalProb},
\[
\P\left(R_i > r \mid U_i,S_i\right) \leq \binom{L}2 e^{-2 r\theta}.
\]
Denote the vertex $a_i = \fsup(U_i, \tsups -S_i-R_i ,\tsups -S_i )$. By  Lemmas~\ref{l:singlePoint} and~\ref{l:updateSuppInq} we have that
\begin{align}
\left\|\P\left(X^{(i)}_{\tsups-S_i}(U_i) \in \cdot\mid U_i,S_i\right) - \pi^{(i)}|_{U_i} \right\|_\tv &\leq \E\left[ \left\|\P\left(X^{(i)}_{\tsups-S_i-R_i}(a_i) \in \cdot \mid U_i,S_i\right) - \pi^{(i)}|_{a_i} \;\Big|\; U_i,S_i \right\|_\tv\right] \nonumber\\
 &\leq \E\left[ e^{-(2 - \theta)(\tsups-S_i-R_i)} \;\big|\; U_i, S_i\right]\,.
 \label{eq:singlePtBound}
\end{align}
We estimate the right hand side as follows:
\begin{align}
\E\left[ e^{-(2 - \theta)(\tsups-S_i-R_i)} \;\big|\; U_i, S_i\right] &\leq \sum_{k=1}^{\ceil{\tsups -S_i}} \P\left(R_i \in (k-1,k)\right) e^{-(2 - \theta)(\tsups-S_i-k)}\nonumber\\
&\leq \sum_{k=1}^{\ceil{\tsups -S_i}} {L \choose 2} e^{-2 (k-1)\theta} e^{-(2 - \theta)(\tsups-S_i-k)}\nonumber\\
&\leq C  \tsups e^{-(\tsups-S_i)  \min\{2\theta,2-\theta\}} \,,
\end{align}
where the final inequality follows by taking the maximal term in the sum.  This, together with~\eqref{eq:singlePtBound}, completes the proof of the lemma.
\end{proof}
We now appeal to the $L^1$-to-$L^2$ reduction developed in~\cite{LS1,LS3}.  Recall that the $L^2$-distance on measures is defined as
\[
\left\|\mu - \pi \right\|_{L^2(\pi)} = \Big(\sum_x \Big|\frac{\mu(x)}{\pi(x)} -1 \Big|^2\pi(x)\Big)^{1/2}\,,
\]
and set
\begin{align}
  \label{eq-product-m-def}
  \ltwo_t = \sum_{i=1}^{m} \left\|\P_{x_0}[ (X^{(i)}_{\tsups-S_i}(U_i) \in \cdot \mid U_i,S_i] -  \pi^{(i)}|_{U_i} \right\|^2_{L^2(\pi^{(i)}|_{U_i})}\,.
\end{align}
By~\cite{LS3}*{Proposition~7},
\begin{equation}\label{eq:L1L2reduction}
\left\|\P\left((X^{(1)}_{\tsups-S_1}(U_1),\ldots,X^{(m)}_{\tsups-S_m}(V_m)) \in \cdot\right) - (\pi^{(1)}|_{U_1},\ldots,\pi^{(m)}|_{U_m})\right\|_\tv \leq \sqrt{\ltwo_t}.
\end{equation}
We are now ready to prove the upper bound for the main theorem.

\begin{proof}[\textbf{\emph{Proof of Theorem~\ref{mainthm-trans}, Upper bound}}]
Again we focus on the case of $x^{\alt}$.  Set
\[
\tpluss =  \frac{1}{(4-2\theta)\wedge 4\theta}\log n + \left(\kappa + \frac{3L+6}{(4-2\theta)\wedge 4\theta}  \right)\log\log n\,.
\]
With this choice of $\tpluss$ we have that $\P(\cI^c) \to 0$ and so, by equations~\eqref{eq:XProductTVBound}, \eqref{eq:VtvBound} and~\eqref{eq:L1L2reduction}, it is sufficient to show that
\begin{equation}\label{eq:UBoundCond}
\E \left[\ltwo_t \one_{\cI} \right]\to 0\, .
\end{equation}
Since each vertex is either plus or minus with probability that is uniformly bounded below by $\frac{e^{-2\beta}}{e^{-2\beta} + e^{2\beta}}$, given any choice of conditioning on the other vertices, we have that
\[
\min_{U_i} \min_{x\in \{\pm 1\}^{U_i}} \pi|_{U_i}(x) \geq \left( \frac{e^{-2\beta}}{e^{-2\beta} + e^{2\beta}} \right)^L.
\]
Comparing the $L^1$ and $L^2$ bounds we have that for any measures $\mu$ and set $U_i$,
\begin{align*}
\left\|\mu|_{U_i} - \pi|_{U_i}\right\|^2_{L^2(\pi|_{U_i})} &= \sum_x \frac1{\pi|_{U_i}(x)} \Big| \mu|_{U_i}(x) - \pi|_{U_i}(x)\Big|^2\\
&\leq 2^L \left( \frac{e^{-2\beta} + e^{2\beta}}{e^{-2\beta}} \right)^L \max_{x\in \{\pm 1\}^{U_i}} \Big|\mu|_{U_i}(x) - \pi|_{U_i}(x)\Big|^2 \\
& \leq  2^L \left( \frac{e^{-2\beta} + e^{2\beta}}{e^{-2\beta}} \right)^L  \Big\|\mu|_{U_i} - \pi|_{U_i}\Big\|^2_{\tv} \, .
\end{align*}
Thus, by Lemma~\ref{l:UiTVDist},
\begin{align*}
\E \left[\ltwo_t \one_{\cI}\right] &\leq \E \bigg[ 2^L \left( \frac{e^{-2\beta} + e^{2\beta}}{e^{-2\beta}} \right)^L \sum_{i=1}^m \left\|\P_{x_0}[ (X^{(i)}_{\tsups-S_i}(U_i) \in \cdot \mid U_i,S_i] -  \pi^{(i)}|_{U_i} \right\|^2_\tv \bigg]\nonumber\\
&\leq  2^L \left( \frac{e^{-2\beta} + e^{2\beta}}{e^{-2\beta}} \right)^L n \E \bigg[\left(C  \tsups e^{-(\tsups-S_i)  \min\{2\theta,2-\theta\}} \right)^2\bigg] \nonumber\\
&\leq C'(\beta)   e^{-\tsups  \min\{4\theta,4-2\theta\}} n \log^2 n\, \E\left[ e^{  \min\{4\theta,4-2\theta\} S_i}\right]\\
& = C'(\beta)  \left( \log n \right)^{-(3L+4)} \,\E \left[e^{  \min\{4\theta,4-2\theta\} S_i}\right]\, ,
\end{align*}
for some $C'(\beta)$.  Finally, by equation~\eqref{eq:SiBound}
\begin{align*}
\E \left[e^{  \min\{4\theta,4-2\theta\} S_i} \right]&\leq \sum_{k=1}^\infty e^{  \min\{4\theta,4-2\theta\} k} \P\left(S_i \in (k-1,k)\right)  \\
&\leq \sum_{k=1}^\infty e^{  \min\{4\theta,4-2\theta\} k}  e^{- (k-1) (L+1)\theta} \log^{3(L+1)} n  = O(\log^{3L+3} n)\,.
\end{align*}
Combining the previous two inequalities implies that $\E \ltwo_t \one_{\cI}\to 0$ and hence we have that
\[
\big \| \P_{x^{\alt}}\left(X_{\tpluss} \in \cdot\right) - \pi \big  \|_\tv \to 0\,,
\]
as required. The proof for $x^{\blt}$ follows similarly for the choice of
\[
\tpluss =  \frac{1}{2\wedge 4\theta}\log n + \left(\kappa + \frac{3L+6}{2\wedge 4\theta}  \right)\log\log n\,.\qedhere
\]
\end{proof}

\section{Lower bounds}

In order to establish the lower bound we will analyze two separate test functions.  First, in order to analyze our test functions, we establish the following decay of correlation bound.

\begin{lemma}\label{l:decorrelation}
Let $V_1,V_2 \subset \Z/ n \Z$ such that $d(V_1,V_2) \geq \log^2 n$ and let $f_i: \{\pm 1\}^{V_i} \to \R$ be functions with $\|f_i \|_\infty \leq 1$.  Then for any initial condition $x_0$ and time $t$ we have that
\[
\Cov_{x_0}(f_1(X_t(V_1)),f_2(X_t(V_2))) = O( n^{-5}).
\]
\end{lemma}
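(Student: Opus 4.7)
The plan is to prove a strictly stronger bound, $\Cov = O(n^{-10})$, via a three-sequence coupling in which $(X_t(V_1),X_t(V_2))$ is replaced by a pair of independent random variables with the correct marginals. The key structural observation is the locality of the graphical construction introduced in Section~\ref{sec:upperBound}: the spin $X_t(v)$ is a deterministic function of $x_0$ restricted to the endpoints of $\fsup(v,\cdot,t)$ and of the updates lying along the spatial trajectory of that history.

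Set $N_i = \{w \in \Z/n\Z : d(w, V_i) \leq \tfrac{1}{10}\log^2 n\}$ for $i=1,2$. Since $d(V_1,V_2) \geq \log^2 n$, the neighborhoods $N_1$ and $N_2$ are disjoint. By Lemma~\ref{l:spread}, the event
\[
\cA_0 = \bigg\{\max_{v \in V_1 \cup V_2}\max_{\substack{0 \leq s \leq t\\ \fsup(v,s,t) \neq \emptyset}} |v - \fsup(v,s,t)| \leq \tfrac{1}{10}\log^2 n\bigg\}
\]
satisfies $\P(\cA_0^c) = O(n^{-10})$, and on $\cA_0$ the full history of $V_i$ is contained in $N_i$. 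Now take three mutually independent update sequences $\mathcal{U}^{(0)}, \mathcal{U}^{(1)}, \mathcal{U}^{(2)}$ and let $X_t$ be the dynamics from $x_0$ driven by $\mathcal{U}^{(0)}$; for $i=1,2$, let $Y_t^{(i)}$ be the dynamics from $x_0$ that, at each site, uses the updates of $\mathcal{U}^{(0)}$ if the site lies in $N_i$ and the updates of $\mathcal{U}^{(i)}$ otherwise.

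Three properties then follow. (a) Each $Y_t^{(i)}$ is driven by a valid i.i.d.\ update sequence with the correct law, so $Y_t^{(i)} \stackrel{d}{=} X_t$. (b) $Y_t^{(1)}(V_1)$ and $Y_t^{(2)}(V_2)$ are independent, since the former is measurable with respect to $(\mathcal{U}^{(0)}|_{N_1}, \mathcal{U}^{(1)})$ and the latter with respect to $(\mathcal{U}^{(0)}|_{N_2}, \mathcal{U}^{(2)})$, and these are disjoint collections of independent variables (the two restrictions of $\mathcal{U}^{(0)}$ are disjoint because $N_1\cap N_2=\emptyset$). (c) On $\cA_0$, the history of $V_i$ under $X_t$ lies entirely in $N_i$, where $X_t$ and $Y_t^{(i)}$ share their updates; hence the histories (which depend only on updates at sites visited) agree, as do the updates along them, giving $X_t(V_i) = Y_t^{(i)}(V_i)$. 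Combining, since $\|f_j\|_\infty \leq 1$,
\[
\E\bigl[f_1(X_t(V_1)) f_2(X_t(V_2))\bigr] = \E\bigl[f_1(Y_t^{(1)}(V_1)) f_2(Y_t^{(2)}(V_2))\bigr] + O(n^{-10}),
\]
and by (a) and (b) the expectation on the right factors as $\E[f_1(X_t(V_1))]\E[f_2(X_t(V_2))]$, yielding the covariance bound (even beating the asserted $O(n^{-5})$). There is no subtle obstacle: the entire argument is a graphical-construction locality statement combined with the concentration provided by Lemma~\ref{l:spread}.
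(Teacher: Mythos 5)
Your proposal is correct and rests on exactly the same ingredients as the paper's proof: Lemma~\ref{l:spread} together with the disjointness of the $\tfrac1{10}\log^2 n$-neighborhoods of $V_1$ and $V_2$, which localizes each $f_i(X_t(V_i))$ to the updates in its own neighborhood outside an event of probability $O(n^{-10})$. The only difference is the device used for factorization: the paper replaces $Y_i=f_i(X_t(V_i))$ by the conditional expectations $\widehat{Y}_i=\E_{x_0}[Y_i\mid\cJ_i]$ with respect to the disjoint update $\sigma$-algebras (independent by construction, with $\E\widehat{Y}_i=\E Y_i$), whereas you manufacture exactly independent copies by resampling the updates outside $N_i$; both routes give the same stronger $O(n^{-10})$ bound.
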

\begin{proof}
We will prove the result by showing that $Y_i = f_i(X_t(V_i))$ can be approximated locally.  Let $V_i^+ = \{v:d(v,V_i) \leq \frac1{10} \log^2 n\}$ and so the $V^+_i$ are disjoint.  Let $\cJ_i$ denote the sigma-algebra of generated by updates in $V_i^+$ and set $\widehat{Y}_i = \E_{x_0} [Y_i \mid \cJ_i]$.
Since the $V^+_i$ are disjoint the $\widehat{Y}_i$ depend on independent updates and so are independent.
Let
\[
\cG = \bigg\{ \max_{\substack{0\leq s \leq t\\ \fsup(v,s,t) \neq \emptyset}} |v - \fsup(v,s,t)| \geq \tfrac1{10} \log^2 n\bigg\}
\]
be the event in Lemma~\ref{l:spread}.  On the event $\cG$, the random variables $Y_i$ are completely determined by the initial condition and the updates in $V_i^+$ and so $Y_i I(\cG) = \widehat{Y}_i I(\cG)$.  Thus,
\[
\left|\E_{x_0}[Y_1 Y_2] - \E[\widehat{Y}_1 \widehat{Y}_2]\right| \leq \E_{x_0}[2 I(\cG^c)] = O(n^{-10})\,.
\]
and hence
\[
\Cov_{x_0}(Y_1, Y_2) = \Cov_{x_0}(Y_1, Y_2) - \Cov_{x_0}(\widehat{Y}_1, \widehat{Y}_2) =  O(n^{-10})\,,
\]
which completes the proof.
\end{proof}
Since the above bound is uniform in $t$ by taking $t$ to infinity we get the result for $X$ given by the stationary measure as well.

\subsection{Autocorrelation test functions}

The magnetization test function achieves, at least up to an additive constant, the mixing time from the all-plus initial condition, which is asymptotically the worst-case (see~\cite{LS4}).  In this light it is natural to consider test functions for $x^{\alt}$ and $x^{\blt}$ based on the autocorrelation, $\sum_{i=1}^n X_t(i) x_0(i)$.  This can be seen as a special case of a test function based on conditional expectations,
\[
R_{x_0,t}(X) = \sum_{i=1}^n X(i) \E_{x_0} [X_t(i)].
\]
Because of the special structure of the histories  as a killed random walk the expectation has the following useful representation.  Let $P_t$ be the semigroup of a continuous-time rate-1 simple random walk on $\Z/n\Z$.  Then by the killed random walk representation we have that
\[
\E_{x_0} [X_t(i)] = e^{-\theta t} (P_{(1-\theta)t} x_0)(i)
\]
The eigenvectors of $P_t$ are $e^{2\pi ikx}$ with eigenvalues $1-\cos(2\pi k\theta)$ for $k\in\{0,\ldots,n-1\}$.  Since the simple random walk is reversible with uniform stationary distribution we can write an orthonormal basis of real eigenvectors $\eta_k$  with eigenvalues $\lambda_k$.  Not that both $x^{\alt}$ and $x^{\blt}$ are eigenvectors of $P_t$ with eigenvalues $2$ and $1$ respectively and in fact $2$ is the largest eigenvalue. We first give a condition for the chain to \emph{not} be sufficiently mixed starting from $x_0$.
\begin{lemma}\label{l:lowerBoundAuto}
If for a sequence of initial conditions $x_0$ and time points $t$ we have
\[
\lim_{n\to\infty}\frac{e^{2\theta t}\log^{3} n } {\|P_{(1-\theta)t} x_0\|_2} = 0\,,
\]
then
\[
\lim_{n\to\infty} \left\|\P_{x_0}\left(X_t \in \cdot\right) - \pi \right\|_\tv = 1\,.
\]
\end{lemma}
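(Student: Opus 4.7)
The plan is to use the test function from the paragraph preceding the lemma, $R(X) := R_{x_0,t}(X) = \sum_i X(i)\E_{x_0}[X_t(i)]$, which by the killed random walk identity equals $e^{-\theta t}\langle X, y\rangle$ with $y := P_{(1-\theta)t}x_0$, and to separate $\P_{x_0}(X_t\in\cdot)$ from $\pi$ via a second-moment (Chebyshev) argument. Note that $\|y\|_\infty \leq 1$ since $P_s$ is a stochastic semigroup.

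The spin-flip symmetry of $\pi$ gives $\E_\pi[R] = 0$, while $\E_{x_0}[X_t(i)] = e^{-\theta t}y(i)$ yields $M := \E_{x_0}[R(X_t)] = e^{-2\theta t}\|y\|_2^2$. For the variance of $R$ under either the stationary law or under $\P_{x_0}(X_t \in \cdot)$, I would expand
\[
\var(R) = e^{-2\theta t}\sum_{i,j} \Cov(X(i), X(j))\, y(i)y(j)
\]
and split based on whether $|i-j|<\log^2 n$. For close pairs, the trivial $|\Cov|\leq 1$ combined with $|y(i)y(j)|\leq \tfrac12(y(i)^2+y(j)^2)$ contributes $O(\log^2 n)\|y\|_2^2$. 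For far pairs, Lemma~\ref{l:decorrelation} (applied to $\pi$ by the remark following its proof) gives $|\Cov| = O(n^{-5})$ uniformly, so summing over the $n^2$ pairs with $\|y\|_\infty \leq 1$ yields total contribution $O(n^{-3})$. Thus $\var(R) = O(e^{-2\theta t}\log^2 n \|y\|_2^2) + O(e^{-2\theta t}n^{-3})$ under either distribution.

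Chebyshev then gives $\P_{x_0}[R(X_t) < M/2]$ and $\P_\pi[R \geq M/2]$ both of order $O(\var/M^2)$, so the event $\{R \geq M/2\}$ separates the two laws to total variation at least $1 - O(\var/M^2)$. Dividing the variance bound by $M^2 = e^{-4\theta t}\|y\|_2^4$ yields $O(e^{2\theta t}\log^2 n/\|y\|_2^2) + O(e^{2\theta t}/(n^3 \|y\|_2^4))$; the first term vanishes directly by the hypothesis (in fact with slack, since the hypothesis controls $e^{2\theta t}\log^3 n/\|y\|_2$, a larger quantity), while the second is negligible given that the hypothesis forces $\|y\|_2 \to \infty$ and $e^{2\theta t}$ is polynomial in $n$ in the relevant regime. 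The main technical point to watch is the variance bound: the decorrelation lemma alone leaves $O(n\log^2 n)$ close pairs to handle, and the AM-GM trick on $|y(i)y(j)|$ is what absorbs them into a factor matching the $\|y\|_2^2$ scale of the mean gap.
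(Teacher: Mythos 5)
Your proposal is correct and follows essentially the same route as the paper: the same autocorrelation test function $R_{x_0,t}$, the same mean computation via the killed random walk representation, a variance bound splitting pairs at distance $\log^2 n$ using Lemma~\ref{l:decorrelation} (with the close pairs absorbed into $O(\log^2 n)\,e^{-2\theta t}\|P_{(1-\theta)t}x_0\|_2^2$, which the paper does via the rearrangement inequality and you via AM--GM), and Chebyshev on the event $\{R \geq M/2\}$. The only cosmetic difference is that you bound the far-pair contribution additively as $O(e^{-2\theta t}n^{-3})$ rather than in terms of $\|y\|_2^2$, and your handling of that term via $\|y\|_2\to\infty$ under the hypothesis is sound.
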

\begin{proof}
Let $Y$ be distributed according to the stationary distribution.  Then by symmetry,
\[
\E \left[R_{x_0,t}(Y)\right] = 0\,,
\]
while
\[
\E_{x_0} \left[R_{x_0,t}(X_t) \right]= \sum_{i=1}^n (\E_{x_0} [X_t(i)])^2 = e^{-2t\theta}\left\|P_{(1-\theta)t} x_0\right\|_2^2.
\]
To estimate the variance, observe that
\begin{align*}
\var_{x_0} \left(  R_{x_0,t}(X_t) \right) &= \sum_{i=1}^n  \sum_{j=-n/2+1}^{n/2}  \Cov\left(X_t(i) \E_{x_0} \left[X_t(i)\right],\, X_t(i+j) \E_{x_0} \left[X_t(i+j)\right]\right)\\
&\leq e^{-2t\theta}\sum_{j=-n/2+1}^{n/2} \sum_{i=1}^n  \big|(P_{(1-\theta)t} x_0)(i)\big|\, \big|(P_{(1-\theta)t} x_0)(i+j) \big| \, \big|\Cov(X_t(i) ,X_t(i+k))\big|\,.
\end{align*}
By Lemma~\ref{l:decorrelation}, this is at most
\begin{align*}
& e^{-2t\theta}\sum_{j=-\log^2 n}^{\log^2 n} \sum_{i=1}^n  \big|(P_{(1-\theta)t} x_0)(i) \big| \, \big |(P_{(1-\theta)t} x_0)(i+j) \big|\\
&\qquad + O(n^{-10}) e^{-2t\theta}\sum_{j=-n/2+1}^{n/2} \sum_{i=1}^n  \big|(P_{(1-\theta)t} x_0)(i) \big| \, \big |(P_{(1-\theta)t} x_0)(i+j) \big|\\
&\leq O(\log^2 n) e^{-2t\theta}\big \| P_{(1-\theta)t} x_0 \big \|_2^2 \, ,
\end{align*}
where the final inequality follows by the rearrangement inequality.  Since Lemma~\ref{l:decorrelation} also applies to the stationary distribution, we further have
\[
\var \left(  R_{x_0,t}(Y) \right) = O(\log^2 n)e^{-2t\theta} \big \| P_{(1-\theta)t} x_0 \big \|_2^2 \, .
\]
Our test function considers the set $A = \big \{x\in \{\pm 1\}^{\Z/n\Z} : R_{x_0,t}(x) \geq \frac12 e^{-2\theta t} \| P_{(1-\theta)t} x_0 \|_2^2 \big \}$.  Therefore, by Chebyshev's inequality,
\[
\P_{x_0}\left( X_t \in A^c \right) \leq \frac{\var_{x_0} (  R_{x_0,t}(X_t) )}{\left(\E_{x_0}[ R_{x_0,t}(X_t)] - \frac12 e^{-2\theta t} \| P_{(1-\theta)t} x_0 \|_2^2  \right)^2} = O\left(\frac{e^{2\theta t} \log^2 n}{ \| P_{(1-\theta)t} x_0 \|_2^2}\right) \, ,
\]
and so by the assumption of the lemma $\P_{x_0}\left( X_t \in A \right) \to 1$.  Similarly,
\[
\P\left( Y \in A \right) \leq \frac{\var (  R_{x_0,t}(Y) )}{\left(\frac12 e^{-2\theta t} \| P_{(1-\theta)t} x_0 \|_2^2\right)^2 } = O\left(\frac{e^{2\theta t} \log^2 n}{ \| P_{(1-\theta)t} x_0 \|_2^2}\right) \, ,
\]
so $\P\left( Y \in A \right) \to 0$ which completes the lemma.
\end{proof}

We can now establish Proposition~\ref{p:autoLowerB}, giving a lower bound for any deterministic initial condition.
\begin{proof}[\textbf{\emph{Proof of Proposition~\ref{p:autoLowerB}}}]
Writing $x_0 = \sum_j b_j \eta_j$ we have that
\[
\left\| P_{t} x_0 \right\|_2^2 = \bigg\| \sum_j b_j \eta_j e^{-\lambda_j t}\bigg\|_2^2 = \sum_j b_j^2  e^{-2\lambda_j t} \geq e^{-4 t} \sum_j b_j^2 = e^{-4 t} \left\| x_0\right\|_2^2 = e^{-4 t} n \, ,
\]
where the inequality follows from the fact that all the eigenvalues are bounded by 2.  Thus,
\[
\frac{e^{2\theta \tpluss}\log^{3} n } {\|P_{(1-\theta)\tpluss} x_0\|_2} \leq \frac{e^{(4-2\theta) \tpluss}\log^{3} n } { n} \leq \frac1{\log n}\,,
\]
and so, by Lemma~\ref{l:lowerBoundAuto}, we have that $\left\|\P_{x_0}\left(X_{\tpluss} \in \cdot\right) - \pi \right\|_\tv \to 1$, as claimed.
\end{proof}
This gives the right bound in the case of $x^{\alt}$ since it is an eigenvector of eigenvalue 2.  For $x^{\blt}$ we get a stronger lower bound.  Since it has eigenvalue 1,
\[
\left\| P_{t} x^{\blt} \right\|_2^2 = \left\| e^{- t} x^{\blt}\right\|_2^2 = e^{-2 t} \left\| x^{\blt}\right\|_2^2 = e^{-2 t} n \, .
\]
So, taking $\tpluss = \frac{1}{2}\log n - 8\log\log n$,
\[
\frac{e^{2\theta \tpluss}\log^{3} n } {\|P_{(1-\theta)\tpluss} x_0\|_2} = \frac{e^{2 \tpluss}\log^{3} n } { n} \leq \frac1{\log n}\,,
\]
and hence by Lemma~\ref{l:lowerBoundAuto} we have that
\begin{equation}\label{eq:bltUBound}
\|\P_{x^{\blt}}\left(X_{\tpluss} \in \cdot\right) - \pi \|_\tv \to 1\,.
\end{equation}

\subsection{Hamiltonian test functions}
The alternating initial condition $x^{\alt}$ is an extreme value for the Hamiltonian and measuring its convergence to stationarity gives another test of convergence.  Such test functions were  studied in~\cite{LS4} to analyze the a random annealed initial condition.  To treat $x^{\alt}$ and $x^{\blt}$ in a unified manner, consider the function $R:\{\pm 1\}^{\Z/n\Z} \to \R$ given by
\[
R(X) = \sum_{i=1}^{n/4} X(4i) X(4i+1)\,.
\]
For every $x_0$ and $t$ we have that, by Lemma~\ref{l:decorrelation},
\begin{align}\label{eq:RVariance}
\var_{x_0} (  R(X_t) ) &= \sum_{i=1}^{n/4}  \sum_{j=1}^{n/4}   \Cov\left(X(4i) X(4i+1),\, X(4j) X(4j+1)]\right)= O(n \log^2 n) \, .
\end{align}
If $Y$ is taken from the stationary distribution by taking a limit as $t\to \infty$, then we also have that $\var (  R(Y) ) = O(n \log^2 n)$.  Let $\sH$ denote the set of all histories of the vertices from time $\tpluss$, and  consider $\E_{x_0}[X_{\tpluss}(i)X_{\tpluss}(i')\mid \sH]$.  If the histories of $i$ and $i'$ merge then $X_{\tpluss}(i)$ and $X_{\tpluss}(i')$ must take the same value and $\E_{x_0}[X_{\tpluss}(i)X_{\tpluss}(i')\mid \sH]=1$.  If the histories do not merge and at least one is killed before reaching time 0 then it is equally likely to be $\pm 1$ so $\E_{x_0}[X_{\tpluss}(i)X_{\tpluss}(i')\mid \sH]=0$.  Thus, the boundary condition can only play a role when both histories survive to time 0 and do not merge, as captured by the event
\[
\cK_{i,i'} = \left\{ |\fsup(i,0,\tpluss)|=|\fsup(i',0,\tpluss)|=1, \, \fsup(i,0,\tpluss)\neq \fsup(i',0,\tpluss) \right\}.
\]
Let $Y$ be an independent configuration distributed as $\pi$  and let $\E_{\pi}$ denotes the expectation started from the stationary measure.  Then
\begin{align}
\E_{\pi}\left[X_{\tpluss}(i)X_{\tpluss}(i+1) \one_{\cK_{i,i+1}}\mid \sH\right] &= \E_{\pi}\left[X_{0}(\fsup(i,0,t))X_{0}(\fsup(i+1,0,t))\one_{\cK_{i,i+1}}\mid \sH\right] \nonumber\\
&= \E[Y(\fsup(i,0,t))Y(\fsup(i+1,0,t)) \one_{\cK_{i,i+1}}\mid \sH] \geq 0 \, ,
\end{align}
as the ferromagnetic Ising model is positively correlated.  In a graph with two vertices connected by an edge, the correlation of spins of the Ising model can be found to be $\tanh \theta$. Correlations are monotone in the edges of the graph, so for neighboring vertices in $\Z/n\Z$ we have $\E[Y(i)Y(i+1)] \geq \tanh \theta >0$.
It was shown in the proof of Theorem~6.4 of~\cite{LS4} that
\begin{align*}
\P\left(\fsup(i,0,t)=\{i\},\,\fsup(i+1,0,t)=\{i+1\}\right)  \geq c_1 t^{-2} e^{-2\theta t}\,,
\end{align*}
and so
\begin{align}\label{eq:Wstationary}
\E_{\pi}\left[X_{\tpluss}(i)X_{\tpluss}(i+1)\one_{\cK_{i,i+1}}\mid \sH\right]
&\geq \tanh(\theta) \P\left(\fsup(i,0,\tpluss)=\{i\},\,\fsup(i+1,0,\tpluss)=\{i+1\}\right)\nonumber\\
&\geq c_1 \tanh(\theta) \tpluss^{-2} e^{-2\theta \tpluss} \, .
\end{align}
We will compare this bound with the behavior under the initial conditions $x^{\alt}$ and $x^{\blt}$.
\begin{claim}
For $x_0 \in \{x^{\alt},x^{\blt}\}$ and $i\equiv 0\pmod 4$ we have that
\[
\E_{x_0}\left[X_{\tpluss}(i)X_{\tpluss}(i+1)\one_{\cK_{i,i+1}}\mid \sH\right] \leq 0.
\]
\end{claim}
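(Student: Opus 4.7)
My plan is to use that both $x^{\alt}$ and $x^{\blt}$ are eigenvectors of the simple-random-walk semigroup on $\Z/n\Z$, combined with a coupling that replaces the coalescing histories by independent killed walks. On the event $\cK_{i,i+1}$ both histories are singletons at time $0$ that do not coincide, so the forward dynamics yields $X_\tpluss(i) = x_0(j)$ and $X_\tpluss(i+1) = x_0(j')$, where $j = \fsup(i,0,\tpluss)$ and $j' = \fsup(i+1,0,\tpluss)$; after averaging over $\sH$, the claim reduces to showing
\[
\E_{x_0}\bigl[x_0(j)\,x_0(j')\,\one_{\cK_{i,i+1}}\bigr] \le 0.
\]

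I would then couple the two histories to independent continuous-time killed walks $\tilde A_t$ and $\tilde B_t$ on $\Z/n\Z$ started at $i$ and $i+1$, each with jump rate $1-\theta$ and kill rate $\theta$, by simply letting the walks continue past each other instead of coalescing. Under this coupling $\cK_{i,i+1}$ coincides with $\tilde\cK := \{\tilde A_t \ne \tilde B_t\ \forall t \in [0,\tpluss],\ \text{both alive at }\tpluss\}$, and $(j,j') = (\tilde A_\tpluss, \tilde B_\tpluss)$ there. The key calculation is that by independence and the eigenvector property,
\[
\E\bigl[x_0(\tilde A_\tpluss)\,x_0(\tilde B_\tpluss)\,\one_{\{\text{both alive}\}}\bigr] \;=\; e^{-2\theta\tpluss}\,\mu^2\,x_0(i)\,x_0(i+1),
\]
where $\mu$ is the eigenvalue of $P^{(1-\theta)}_\tpluss$ on $x_0$ ($\mu = e^{-2(1-\theta)\tpluss}$ for $x^{\alt}$, $\mu = e^{-(1-\theta)\tpluss}$ for $x^{\blt}$). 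Since $i \equiv 0 \pmod 4$, one checks directly that $x_0(i)\,x_0(i+1) = -1$ for both choices, making this expression strictly negative.

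To pass from the ``both alive'' event to $\tilde\cK$, split $\one_{\{\text{both alive}\}} = \one_{\tilde\cK} + \one_{\{\tau \le \tpluss,\,\text{both alive}\}}$ where $\tau$ is the first meeting time of $\tilde A$ and $\tilde B$, and apply the strong Markov property at $\tau$: conditional on $\tau = s \le \tpluss$ and $\tilde A_\tau = \tilde B_\tau = a$, the two walks continue as independent killed walks from the common site $a$, so the same eigenvector calculation gives
\[
\E\bigl[x_0(\tilde A_\tpluss)\,x_0(\tilde B_\tpluss)\,\one_{\{\text{both alive at }\tpluss\}} \bigm| \tau=s,\ \tilde A_\tau=a\bigr] \;=\; e^{-2\theta(\tpluss-s)}\,\mu(\tpluss-s)^2\,x_0(a)^2 \;\ge\; 0,
\]
where the crucial sign flip comes from $x_0(a)^2 = 1$ replacing $x_0(i)\,x_0(i+1) = -1$. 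Integrating over $(\tau,a)$ shows the collision contribution is nonnegative, so
\[
\E\bigl[x_0(\tilde A_\tpluss)\,x_0(\tilde B_\tpluss)\,\one_{\tilde\cK}\bigr] \;=\; (\text{strictly negative}) - (\text{nonnegative}) \;\le\; 0,
\]
which yields the claim.

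The main obstacle is setting up the coupling so that $\cK$ for the coalescing histories coincides exactly with $\tilde\cK$ for the independent walks, which requires keeping the killing decisions independent of whether the two walks have already met, and then checking that the strong Markov step genuinely restarts two independent killed walks with the original rates. The pleasing feature of the argument is that the entire sign is pinned down by the single scalar $x_0(i)\,x_0(i+1)$, which equals $-1$ precisely because $x^{\alt}$ and $x^{\blt}$ take opposite values at the pair $(i,i+1)$ whenever $i \equiv 0 \pmod 4$.
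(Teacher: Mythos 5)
Your proposal is correct and follows essentially the same route as the paper: you decompose the no-collision expectation as the unconstrained term minus the collision term, compute the former via the eigenvector (autocorrelation) property of $x_0$ under the rate-$(1-\theta)$ walks (negative because $x_0(i)x_0(i+1)=-1$), and dispose of the latter by the strong Markov property at the meeting time, where $x_0(a)^2=1$ forces a nonnegative contribution--exactly the paper's argument phrased with the product process $W(t)=x_0(Z_1(t))x_0(Z_2(t))$ and its decorrelation rate. The only differences are cosmetic bookkeeping of the killing factor, which does not affect the sign.
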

\begin{proof}
We  first treat the case of $x^{\alt}$.  Let $Z_1(t)$ and $Z_2(t)$ be independent  rate-($1-\theta$) continuous-time simple random walks with initial conditions $Z_1(0)=i$ and $Z_2(0)=i+1$.  Let $T$ denote the first time the walks hit each other and $W(t) = x^{\alt}(Z_1(t))x^{\alt}(Z_2(t))$. By the killed random walk representation of the histories, we have that
\begin{align*}
\E_{x^{\alt}}\left[X_{\tpluss}(i)X_{\tpluss}(i+1)\one_{\cK_{i,i+1}}\mid \sH\right] = e^{-2\theta} \E \left[x^{\alt}(Z_1(\tpluss))x^{\alt}(Z_2(\tpluss))\one_{T>\tpluss}\right] = e^{-2\theta} \E \left[W(\tpluss)\one_{T>\tpluss}\right].
\end{align*}
Note that $W(t)$ is itself a Markov chain with state space $\{\pm 1\}$ and transition rate $2(1-\theta)$, and so
\begin{equation}\label{eq:RWsignEstimate}
\E[W(t+s)\mid W(s)] = e^{-4(1-\theta)} W(s)\,.
\end{equation}
Thus, since $W(0)=-1$ by the definition of $x^{\alt}$, and $W(T)=1$, applying~\eqref{eq:RWsignEstimate} we get
\begin{align*}
\E \left[W(\tpluss)\one_{T>\tpluss}\right] &= \E \left[W(\tpluss)\right] - \E \left[W(\tpluss)\one_{T\leq\tpluss}\right] = -e^{-4(1-\theta)\tpluss} - \E \left[\one_{T\leq\tpluss} \E \left[W(\tpluss) \Big | T \right]  \right]\\
& = -e^{-4(1-\theta)\tpluss} - \E \left[\one_{T\leq\tpluss} e^{-4(1-\theta)(\tpluss-T)}  \right] \leq 0\,.
\end{align*}
Hence, $\E_{x^{\alt}}[X_{\tpluss}(i)X_{\tpluss}(i+1)\one_{\cK_{i,i+1}}\mid \sH] \leq 0$.

For $x^{\blt}$, the process $x^{\blt}(Z_1(t))x^{\blt}(Z_2(t))$ is again a Markov chain but with transition rate $1-\theta$.  The requirement that $i$ is a multiple of 4 was chosen to ensure that $x^{\blt}(Z_1(0))x^{\blt}(Z_2(0))=-1$. The argument is otherwise unchanged.
\end{proof}
Combining Lemma~\ref{eq:RWsignEstimate} with equation~\eqref{eq:Wstationary}, we obtain that
\begin{align*}
&\E_\pi[X_{\tpluss}(i)X_{\tpluss}(i+1)] - \E_{x^{\alt}}[X_{\tpluss}(i)X_{\tpluss}(i+1)]\\
 & \qquad= \E_\pi[X_{\tpluss}(i)X_{\tpluss}(i+1)\one_{\cK_{i,i+1}}] - \E_{x^{\alt}}[X_{\tpluss}(i)X_{\tpluss}(i+1)\one_{\cK_{i,i+1}}] \geq c_1 \tanh(\theta) \tpluss^{-2} e^{-2\theta \tpluss} \, ,
\end{align*}
and thus
\begin{align}\label{eq:RComparison}
\E[R(Y)] - \E_{x_0}[R(X_{\tpluss})] \geq  c_1 \tanh(\theta) \tpluss^{-2} e^{-2\theta \tpluss} \frac{n}{4} \, .
\end{align}
We are now ready to prove the second lower bound.
\begin{lemma}\label{l:HamiltonianLowerB}
Set
\[
\tpluss = \tfrac1{4\theta} \tpluss - \tfrac{5}{\theta} \log \log n\,.
\]
For $x_0 \in \{x^{\alt},x^{\blt}\}$ we have
\[
\lim_{n\to\infty} \|\P_{x_0}[X_{\tpluss} \in \cdot] - \pi \|_\tv = 1\,.
\]
\end{lemma}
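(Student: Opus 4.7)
The plan is to use the Hamiltonian-type test function $R$ as a distinguishing statistic, combining the mean separation in~\eqref{eq:RComparison} with the variance bound~\eqref{eq:RVariance} through a standard Chebyshev/second-moment argument. All the heavy lifting has already been done: decay of correlations controls the variance, and the inequality $\E_\pi[X_{\tpluss}(i)X_{\tpluss}(i+1)\one_{\cK_{i,i+1}}] \geq c_1\tanh(\theta)\tpluss^{-2}e^{-2\theta\tpluss}$ together with the sign claim for $x^\alt,x^\blt$ yields a macroscopic bias in the expected value of $R$.

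First, I would substitute $\tpluss = \tfrac{1}{4\theta}\log n - \tfrac{5}{\theta}\log\log n$ into~\eqref{eq:RComparison} and observe that
\[
e^{-2\theta\tpluss} = n^{-1/2}(\log n)^{10},
\]
which gives a mean gap of order $n^{1/2}(\log n)^{8}/\tpluss^{2} \gtrsim n^{1/2}(\log n)^{6}$ between $\E_\pi[R(Y)]$ and $\E_{x_0}[R(X_{\tpluss})]$. Meanwhile, by~\eqref{eq:RVariance} both $\var_{x_0}(R(X_{\tpluss}))$ and $\var_\pi(R(Y))$ are $O(n\log^2 n)$, so the corresponding standard deviations are only $O(n^{1/2}\log n)$. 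The signal-to-noise ratio is thus $(\log n)^{\Omega(1)} \to \infty$.

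Next, I would fix the threshold
\[
M = \tfrac12\bigl(\E_\pi[R(Y)] + \E_{x_0}[R(X_{\tpluss})]\bigr)
\]
and set $A = \{x : R(x) \leq M\}$. Chebyshev's inequality applied separately under $\P_{x_0}$ and under $\pi$ then gives
\[
\P_{x_0}(X_{\tpluss}\in A^c) \;\leq\; \frac{\var_{x_0}(R(X_{\tpluss}))}{(\E_{x_0}[R(X_{\tpluss})]-M)^2} = O\bigl((\log n)^{-12}\bigr),
\]
and the symmetric estimate yields $\pi(A) = O((\log n)^{-12})$. Taking the difference of probabilities of $A$ produces a total variation lower bound tending to $1$, which is exactly the claim.

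The only place that requires any care is verifying the sign convention: in~\eqref{eq:RComparison} the gap is oriented so $\E_\pi[R]$ exceeds $\E_{x_0}[R]$, which is what the claim preceding the lemma established (noting that for $x^\blt$ one uses $i\equiv 0\pmod 4$ to ensure $x^\blt(i)x^\blt(i+1)=-1$, so the summation in $R$ picks up the correct anti-aligned pairs). I do not anticipate any obstacle beyond bookkeeping; the main structural inputs—Lemma~\ref{l:decorrelation} for the variance, the merge-vs.-survive dichotomy encoded in $\cK_{i,i'}$, and the random-walk coupling computation giving $\E_{x_0}[W(\tpluss)\one_{T>\tpluss}]\leq 0$—have already been proved above.
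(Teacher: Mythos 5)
Your proposal is correct and follows essentially the same route as the paper: Chebyshev's inequality applied to the test statistic $R$ with the threshold at the midpoint $\tfrac12(\E[R(Y)]+\E_{x_0}[R(X_{\tpluss})])$, using the variance bound~\eqref{eq:RVariance} and the mean separation~\eqref{eq:RComparison}. The only discrepancies are immaterial polylog bookkeeping slips (the gap is in fact of order $n^{1/2}(\log n)^{8}$, since $n\,e^{-2\theta\tpluss}\tpluss^{-2}\asymp n^{1/2}(\log n)^{10}/(\log n)^2$, so your stated Chebyshev exponents are slightly off but the bound still tends to $0$), which do not affect the argument.
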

\begin{proof}
Denote by $A$ the event
\[
A = \big \{x\in \{\pm 1\}^{\Z/n\Z} : R(x) \geq \tfrac12 (\E[R(Y)] + \E_{x_0}[R(X_{\tpluss})]) \big \}\,.
\]
By Chebyshev's inequality and equations~\eqref{eq:RVariance} and~\eqref{eq:RComparison}\,
\[
\P_{x_0}(X_{\tpluss}\in A) \leq \frac{\var_{x_0}(R(X_{\tpluss}))}{\left(\frac12(\E[R(Y)] - \E_{x_0}[R(X_{\tpluss})])\right)^2} = O\left( \frac{n \log^2 n}{\tpluss^{-4} e^{-4\theta \tpluss}n^2} \right) \to 0 \, ,
\]
and similarly
\[
\P(Y\in A^c) \leq \frac{\var(R(Y))}{\left(\frac12(\E[R(Y)] - \E_{x_0}[R(X_{\tpluss})])\right)^2} \to 0 \, .
\]
Hence, $\|\P_{x_0}(X_{\tpluss} \in \cdot) - \pi \|_\tv \to 1$, as claimed.
\end{proof}

\begin{proof}[\textbf{\emph{Proof of Theorem~\ref{mainthm-trans}, Lower bound}}]
The case of $x^{\alt}$ follows from combining Proposition~\ref{p:autoLowerB} and Lemma~\ref{l:HamiltonianLowerB}, while the lower bound for $x^{\blt}$ follows from equation~\eqref{eq:bltUBound} and Lemma~\ref{l:HamiltonianLowerB}.
\end{proof}

\subsection*{Acknowledgements} We thank Yuval Peres for helpful discussions.

\begin{bibdiv}
\begin{biblist}

\bib{Aldous}{article}{
  author = {Aldous, David},
  title = {Random walks on finite groups and rapidly mixing {M}arkov chains},
  booktitle = {Seminar on probability, XVII},
  series = {Lecture Notes in Math.},
  volume = {986},
  pages = {243--297},
  publisher = {Springer},
  address = {Berlin},
  year = {1983},
}

\bib{AD}{article}{
  author = {Aldous, David},
  author = {Diaconis, Persi},
  title = {Shuffling cards and stopping times},
  journal = {Amer. Math. Monthly},
  volume = {93},
  pages = {333--348},
  year = {1986},
}

\bib{CPS}{article}{
   author={Cox, J. Theodore},
   author={Peres, Yuval},
   author={Steif, Jeffrey E.},
   title={Cutoff for the noisy voter model},
   journal={Ann. Appl. Probab.},
   volume={26},
   date={2016},
   number={2},
   pages={917--932},
}

\bib{Diaconis}{article}{
  author = {Diaconis, Persi},
  title = {The cutoff phenomenon in finite {M}arkov chains},
  journal = {Proc. Nat. Acad. Sci. U.S.A.},
  volume = {93},
  year = {1996},
  number = {4},
  pages = {1659--1664},
}

\bib{DiSh}{article}{
  author = {Diaconis, Persi},
  author = {Shahshahani, Mehrdad},
  title = {Generating a random permutation with random transpositions},
  journal = {Z. Wahrsch. Verw. Gebiete},
  volume = {57},
  year = {1981},
  number = {2},
  pages = {159--179},
}

\bib{LLP}{article}{
  title   = {Glauber dynamics for the Mean-field Ising Model: cut-off, critical power law, and metastability},
  author  = {Levin, David A.},
  author = {Luczak, Malwina},
  author = {Peres, Yuval},
  journal={Probab. Theory Related Fields},
   volume={146},
   date={2010},
   number={1--2},
   pages={223--265},
}

\bib{LPW}{book}{
  title={{Markov chains and mixing times}},
  author={Levin, D.A.},
  author={Peres, Y.},
  author={Wilmer, E.L.},
  journal={American Mathematical Society},
  year={2008},
}

\bib{LS1}{article}{
    author = {Lubetzky, Eyal},
    author = {Sly, Allan},
    title = {Cutoff for the Ising model on the lattice},
    journal = {Invent. Math.},
    volume={191},
    date={2013},
    number={3},
    pages={719-–755},
}

\bib{LS3}{article}{
    author = {Lubetzky, Eyal},
    author = {Sly, Allan},
    title = {Cutoff for general spin systems with arbitrary boundary conditions},
    journal = {Comm. Pure. Appl. Math.},
    volume={67},
    date={2014},
    number={6},
    pages={982--1027},
}

\bib{LS6}{article}{
   author={Lubetzky, Eyal},
   author={Sly, Allan},
   title={An exposition to information percolation for the Ising model},
   journal={Ann. Fac. Sci. Toulouse Math. (6)},
   volume={24},
   date={2015},
   number={4},
   pages={745--761},
}

\bib{LS4}{article}{
   author={Lubetzky, Eyal},
   author={Sly, Allan},
   title={Information percolation and cutoff for the stochastic Ising model},
   journal={J. Amer. Math. Soc.},
   volume={29},
   date={2016},
   number={3},
   pages={729--774},
}

\bib{LS5}{article}{
    author = {Lubetzky, Eyal},
    author = {Sly, Allan},
    title = {Universality of cutoff for the Ising model},
    journal={Ann. Probab.},
	status = {to appear},
}

\bib{Martinelli97}{article}{
   author={Martinelli, Fabio},
   title={Lectures on Glauber dynamics for discrete spin models},
   conference={
      title={Lectures on probability theory and statistics},
      address={Saint-Flour},
      date={1997},
   },
   book={
      series={Lecture Notes in Math.},
      volume={1717},
      publisher={Springer},
      place={Berlin},
   },
   date={1999},
   pages={93--191},
}

\end{biblist}
\end{bibdiv}

\end{document}